\newtheorem{theorem}{Theorem}[section]
\newtheorem{lemma}[theorem]{Lemma}
\newtheorem{corollary}[theorem]{Corollary}
\newcommand{\appsection}[1]{\let\oldthesection\thesection
  \renewcommand{\thesection}{Appendix \oldthesection}
  \section{#1}\let\thesection\oldthesection}
\def\spn{\operatorname{span}}
 \def\eqd{\,{\buildrel d \over =}\,} 
  \def\leqd{\,{\buildrel d \over \leq}\,} 
 \def\geqd{\,{\buildrel d \over \geq}\,} 
 \def\rightarrowas{\,{\buildrel {a.s.} \over \rightarrow}\,}
  \def\E{{\Bbb E}}
\def\P{\Bbb P}
 \def\R{{\Bbb R}}
\definecolor{ejc}{RGB}{50,50,200}
\definecolor{ejcrw}{RGB}{255,0,0}
\definecolor{rw}{RGB}{50,200,50}
\numberwithin{equation}{section}
\title{Randomized Algorithms for Low-Rank Matrix Factorizations: Sharp
  Performance Bounds} 
\author{Rafi Witten\thanks{ Bit Body, Inc., Cambridge, MA} \, and
  Emmanuel Cand\`es\thanks{Departments of Mathematics and of
    Statistics, Stanford University, Stanford CA}} \date{August 2013}
\begin{document}
\maketitle


\begin{abstract}The development of randomized algorithms for numerical
  linear algebra, e.g.~for computing approximate QR and SVD
  factorizations, has recently become an intense area of
  research. This paper studies one of the most frequently discussed
  algorithms in the literature for dimensionality
  reduction---specifically for approximating an input matrix with a
  low-rank element.  We introduce a novel and rather intuitive
  analysis of the algorithm in \cite{Rokhlin2008}, which allows us to
  derive sharp estimates and give new insights about its
  performance. This analysis yields theoretical guarantees about the
  approximation error and at the same time, ultimate limits of
  performance (lower bounds) showing that our upper bounds are
  tight. Numerical experiments complement our study and show the
  tightness of our predictions compared with empirical observations.
\end{abstract}

\section{Introduction}

Almost any method one can think of in data analysis and scientific
computing relies on matrix algorithms. In the era of `big data', we
must now routinely deal with matrices of enormous sizes and reliable
algorithmic solutions for computing solutions to least-squares
problems, for computing approximate QR and SVD factorizations and
other such fundamental decompositions are urgently
needed. Fortunately, the development of randomized algorithms for
numerical linear algebra has seen a new surge in recent years and we
begin to see computational tools of a probabilistic nature with the
potential to address some of the great challenges posed by big
data. In this paper, we study one of the most frequently discussed
algorithms in the literature for dimensionality reduction, and provide
a novel analysis which gives sharp performance bounds.

\subsection{Approximate low-rank matrix factorization}

We are concerned with the fundamental problem of approximately
factorizing an arbitrary $m \times n$ matrix $A$ as 
\begin{equation}
  \label{eq:ABC}
\begin{array}{cccc}
  A & \approx & B & C\\
m \times n & & m \times \ell & \ell \times n
\end{array}
\end{equation}
where $\ell \le \min(m,n) = m \wedge n$ is the desired rank. The goal
is to compute $B$ and $C$ such that $A - BC$ is as small as possible.
Typically, one measures the quality of the approximation by taking
either the spectral norm $\|\cdot\|$ (the largest singular value, also
known as the 2 norm) or the Frobenius norm $\|\cdot\|_F$ (the root-sum
of squares of the singular values) of the residual $A-BC$. It is
well-known that the best rank-$\ell$ approximation, measured either in
the spectral or Frobenius norm, is obtained by truncating the singular
value decomposition (SVD), but this can be prohibitively expensive
when dealing with large matrix dimensions.

Recent work \cite{Rokhlin2008} introduced a randomized algorithm for
matrix factorization with lower computational complexity.
\begin{algorithm}
\caption{Randomized algorithm for matrix approximation}
\label{alg}
\begin{algorithmic}[1]
  \Require Input: $m \times n$ matrix $A$ and desired rank $\ell$.
  \Statex Sample an $n \times \ell$ test matrix $G$ with independent
  mean-zero, unit-variance Gaussian entries.   
\Statex Compute $H =  AG$.  
\Statex Construct $Q \in \R^{m \times \ell}$ with columns forming
  an orthonormal basis for the range of $H$.
\Statex \textbf{return} the approximation $B = Q$, $C = Q^*A$.
\end{algorithmic}
\end{algorithm}

The algorithm is simple to understand: $H = AG$ is an approximation of
the range of $A$; we therefore project the columns of $A$ onto this
approximate range by means of the orthogonal projector $Q Q^*$ and
hope that $A \approx BC = QQ^*A$. Of natural interest is the accuracy
of this procedure: how large is the size of the residual? Specifically,
how large is
$\|(I-QQ^*)A\|$? 

The subject of the beautiful survey \cite{HMT} as well as
\cite{Rokhlin2008} is to study this problem and provide an analysis of
performance.  Before we state the sharpest results known to date, we
first recall that if $\sigma_1 \geq \sigma_2 \geq \ldots \geq
\sigma_{m \wedge n}$ are the ordered singular values of $A$, then the
best rank-$\ell$ approximation obeys
\begin{equation*}
  \text{min} \{\| A - B \| : \operatorname{rank}(B) \le \ell\} 
= \sigma_{\ell+1}. 
\end{equation*}
It is known that there are choices of $A$ such that $\E \|(I-QQ^*)A\|$
is greater than $\sigma_{\ell+1}$ by an arbitrary multiplicative
factor, see e.g.~\cite{HMT}. For example, setting
\begin{equation*}
A= \begin{bmatrix}t&0\\ 0& 1 \end{bmatrix}
\end{equation*}
and $\ell = 1$, direct computation shows that $\lim_{t\rightarrow
  \infty} \E \|(I-QQ^*)A\| = \infty$. Thus, we write $\ell = k+p$
(where $p>0$) and seek $\underline{b}$ and $\overline{b}$ such that
\begin{equation*}
  \underline{b}(m,n,k,p) \leq \sup_{A \in \R^{m\times n}} \E \| (I - QQ^*)A \|/\sigma_{k+1} \leq \overline{b}(m,n,k,p).
\end{equation*}

We are now ready to state the best results concerning the performance
of Algorithm \ref{alg} we are aware of.
\begin{theorem}[\cite{HMT}]
\label{teo:HMT}
Let $A$ be an $m \times n$ matrix and run Algorithm \ref{alg} with
$\ell= k+p$, then
\begin{equation}
\label{eq:HMT}
\E \|(I - QQ^*)A\|\leq \left[ 1 + \frac{4 \sqrt{k+p}}{p-1}\sqrt{m \wedge n} 
\right] \sigma_{k+1}.
\end{equation}
\end{theorem}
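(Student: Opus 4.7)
The plan is to first use the SVD $A = U\Sigma V^*$ to turn the question into one about purely Gaussian matrices. Since $V$ is orthogonal, $\tilde G := V^* G \eqd G$. Partition the singular values into the top $k$ and the rest, giving blocks $\Sigma_1 = \diag(\sigma_1,\ldots,\sigma_k)$ and $\Sigma_2$, with $\|\Sigma_2\| = \sigma_{k+1}$; correspondingly partition $\tilde G$ into a top block $G_1 \in \R^{k\times \ell}$ and a bottom block $G_2$ (which effectively has at most $(m\wedge n)-k$ rows because $A$ has at most $m\wedge n$ nonzero singular values). The submatrices $G_1$ and $G_2$ are independent standard Gaussians.

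\textbf{Step 2: A deterministic structural bound.} The heart of the proof is the almost-sure inequality
\begin{equation*}
\|(I-QQ^*)A\| \leq \sigma_{k+1} + \|\Sigma_2 G_2 G_1^\dagger\|,
\end{equation*}
valid on the event that $G_1$ has full row rank. To prove it I would exhibit an explicit rank-$k$ subspace contained in the range of $AG$---namely, the columns of the perturbed top left singular subspace $U_1 + U_2 \Sigma_2 G_2 G_1^\dagger \Sigma_1^{-1}$---and observe that $QQ^*$, being the orthogonal projector onto the full range of $AG$, performs at least as well as projecting onto this explicit subspace. A short block computation turns the residual into $\|\Sigma_2\|^2 + \|\Sigma_2 G_2 G_1^\dagger\|^2$ under the square in spectral norm, and subadditivity of the square root gives the displayed bound.

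\textbf{Step 3: Taking expectations.} Using sub-multiplicativity together with independence of $G_1$ and $G_2$,
\begin{equation*}
\E\|\Sigma_2 G_2 G_1^\dagger\| \leq \sigma_{k+1}\,\E\|G_2\|\cdot\E\|G_1^\dagger\|.
\end{equation*}
I would then plug in the Davidson--Szarek bound $\E\|G_2\|\leq \sqrt{(m\wedge n)-k}+\sqrt{\ell}$ for the spectral norm of a Gaussian matrix, and an inverse-Wishart bound $\E\|G_1^\dagger\|$ of order $\sqrt{\ell}/(p-1)$ for the pseudoinverse of a short fat Gaussian. Simplifying with $\ell = k+p$, bounding $\sqrt{(m\wedge n)-k}+\sqrt{\ell}$ by a multiple of $\sqrt{m\wedge n}$, and absorbing numerical constants yields the stated coefficient $4\sqrt{k+p}\sqrt{m\wedge n}/(p-1)$.

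\textbf{Main obstacle.} The creative step is Step 2: guessing the right perturbed subspace inside the range of $AG$ and carrying out the block bookkeeping to recover a clean two-term upper bound on the residual. Step 3 is largely mechanical, but it crucially requires $p \geq 2$ so that $\E\|G_1^\dagger\|$ is even finite---this is precisely what forces the $1/(p-1)$ denominator in the final constant and explains why some oversampling is unavoidable in this style of analysis.
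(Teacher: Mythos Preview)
Your outline is essentially the original Halko--Martinsson--Tropp argument, and it is correct in broad strokes: SVD reduction, the deterministic perturbation bound $\|(I-QQ^*)A\|^2 \le \|\Sigma_2\|^2 + \|\Sigma_2 G_2 G_1^\dagger\|^2$ obtained by exhibiting a perturbed top-$k$ subspace inside the range of $AG$, and then integration using Gaussian tail bounds on $\|G_2\|$ and $\|G_1^\dagger\|$. One small slip: the bound you quote for $\E\|G_1^\dagger\|$ in spectral norm is $e\sqrt{k+p}/p$ (Chen--Dongarra), not $\sqrt{\ell}/(p-1)$; the $1/(p-1)$ factor belongs to the Frobenius-norm identity $\E\|G_1^\dagger\|_F^2 = k/(p-1)$. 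Combining both correctly is what produces the constant $4$ in the HMT statement.

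However, this is \emph{not} how the present paper handles Theorem~\ref{teo:HMT}. That theorem is quoted from \cite{HMT} and not re-proved here; the paper instead recovers (and sharpens) it as a by-product of a completely different analysis. Rather than bounding $\|f(A,X)\|$ deterministically for every test matrix $X$ and then integrating, the paper proves a monotonicity lemma (Lemma~\ref{lem:monotonicity_forward_statement}): if $\sigma(B)\le\sigma(A)$ entrywise, then $\sigma(f(B,G))\leqd\sigma(f(A,G))$. This identifies the worst-case input as $M(t)=\diag(tI_k,I_{n-k})$ with $t\to\infty$, and an explicit limit computation (Section~\ref{sec:newbounds_subsection}) shows the error equals the concrete random variable $W$ in \eqref{eq:W}. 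Corollary~\ref{cor:newthm} then bounds $\E W$ above by $1+(\sqrt{n-k}+\sqrt{k})\,\E\|\Sigma^{-1}\|$, and plugging in $\E\|\Sigma^{-1}\|\le e\sqrt{k+p}/p$ from \eqref{eq:Sigma_inv} yields a bound that dominates \eqref{eq:HMT}.

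What each approach buys: your (HMT) route gives a clean deterministic inequality valid for any test matrix, but offers no information on tightness. The paper's monotonicity route is more elementary in its ingredients, pinpoints the exact worst-case distribution $W$, and therefore produces matching lower bounds---something the perturbation-theoretic argument cannot do.
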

It is a priori unclear whether this upper bound correctly predicts the
expected behavior or not. That is to say, is the dependence upon the
problem parameters in the right-hand side of the right order of
magnitude? Is the bound tight or can it be substantially improved? Are
there lower bounds which would provide ultimate limits of performance?
The aim of this paper is merely to provide some definite answers to
such questions.


\subsection{Sharp bounds}

It is convenient to write the residual in Algorithm \ref{alg} as
$f(A,G) := (I - QQ^*) A$ as to make the dependence on the random test
matrix $G$ explicit. Our main result states that there is an explicit
random variable whose size completely determines the accuracy of the
algorithm. This statement uses a natural notion of stochastic
ordering; below we write $X \,{\buildrel d \over \ge}\, Y$ if and only
if the random variables $X$ and $Y$ obey $\P(X \ge t) \ge \P(Y\ge t)$
for all $t \in \R$.

\begin{theorem}\label{newtheorem}
  Suppose without loss of generality that $m \ge n$. Then in the setup
  of Theorem \ref{teo:HMT}, for each matrix $A \in \R^{m \times n}$,
\begin{equation*}
\label{eq:upper}
  \|(I - QQ^*)A\|\leqd \sigma_{k+1} \, W,
\end{equation*}
where $W$ is the random variable 
\begin{equation}
\label{eq:W}
W =  \| f(I_{n-k}, X_2) 
\begin{bmatrix} X_1 \Sigma^{-1} & I_{n-k} \end{bmatrix} \|;
\end{equation}
here, $X_1$ and $X_2$ are respectively $(n-k) \times k$ and $(n-k)
\times p$ matrices with i.i.d.~$\mathcal{N}(0,1)$ entries, $\Sigma$ is
a $k \times k$ diagonal matrix with the singular values of a
$(k+p)\times k$ Gaussian matrix with i.i.d.~$\mathcal{N}(0,1)$
entries, and $I_{n-k}$ is the $(n-k)$-dimensional identity
matrix. Furthermore, $X_1$, $X_2$ and $\Sigma$ are all independent
(and independent from $G$). In the other direction, for any $\epsilon
> 0$, there is a matrix $A$ with the property
\begin{equation*}
\label{eq:lower}
 \|(I - QQ^*)A\| \geqd (1-\epsilon) \sigma_{k+1} \, W. 
\end{equation*}
In particular, this gives 
\[
\sup_A \,\, \E \|(I - QQ^*)A\|/\sigma_{k+1} = \E W.
\]
\end{theorem}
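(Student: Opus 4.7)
The plan is to reduce $(I - QQ^*)A$ to an explicit formula in a few independent Gaussian blocks, control it by a single PSD-contraction lemma, and then produce a matching lower bound via a limiting matrix.

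First, I would normalize $A$. By the SVD $A = U\Sigma V^*$ and $V^* G \eqd G$, I may assume $A$ is $n \times n$ diagonal (the $m \geq n$ case reduces to $n \times n$ by discarding zero rows), and by scaling I may assume $\sigma_{k+1} = 1$, so $\Sigma_1 := \operatorname{diag}(\sigma_1, \ldots, \sigma_k) \succeq I$ and $\Sigma_2 := \operatorname{diag}(\sigma_{k+1}, \ldots, \sigma_n) \preceq I$. Partition $G = \begin{bmatrix} G_1 \\ G_2 \end{bmatrix}$ conformally, do a QR decomposition $G_1^* = Y_1 R_1$, and extend $Y_1$ to an orthogonal $Y = [Y_1 \; Y_2]$; then $\tilde G_{2,i} := G_2 Y_i$ are independent Gaussians, independent of $R_1$ by rotational invariance. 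Right-multiplying $H = AG$ by $Y$ and then by the invertible block-diagonal matrix $\operatorname{diag}((R_1^*)^{-1}\Sigma_1^{-1}, I_p)$ leaves $S := \operatorname{range}(H)$ unchanged and exhibits it as the range of $K := \begin{bmatrix} I_k & 0 \\ M & N \end{bmatrix}$ with $M := \Sigma_2 \tilde G_{2,1}(R_1^*)^{-1}\Sigma_1^{-1}$ and $N := \Sigma_2 \tilde G_{2,2}$. Because the columns of $\begin{bmatrix} I_k \\ M \end{bmatrix}$ lie in $S$, one gets $(I - QQ^*)\begin{bmatrix} \Sigma_1 \\ 0 \end{bmatrix} = -(I - QQ^*)\begin{bmatrix} 0 \\ M\Sigma_1 \end{bmatrix}$; combined with $M\Sigma_1 = \Sigma_2 \tilde G_{2,1}(R_1^*)^{-1}$ this produces the column-swap formula $(I - QQ^*)A = (I - QQ^*)\begin{bmatrix} 0 \\ I_{n-k} \end{bmatrix}\Sigma_2[-\tilde G_{2,1}(R_1^*)^{-1} \; I_{n-k}]$. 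A block Gram--Schmidt on $K$ (using the push-through identity $I - M(I+M^*M)^{-1}M^* = (I+MM^*)^{-1}$) gives $(I - QQ^*)\begin{bmatrix} 0 \\ I \end{bmatrix} = \begin{bmatrix} -M^*L \\ L \end{bmatrix}(I - P_{LN})L$ with $L := (I + MM^*)^{-1/2}$; the leading factor is an isometry ($L(I+MM^*)L = I$), so writing $B := \tilde G_{2,1}(R_1^*)^{-1}\Sigma_1^{-1}$,
\[
\|(I - QQ^*)A\| \;=\; \|(I - P_{LN})L\Sigma_2[-B \; I]\|.
\]

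The engine of the upper bound is the lemma: for any PSD $L$ with $L \preceq I$, matrix $X$, and subspace $V$,
\[
\|(I - P_{LV})LX\| \;\leq\; \|(I - P_V)X\|.
\]
To prove it, decompose $Xv = u + w$ with $u \in V$ and $w \in V^\perp$; then $Lu \in LV$ forces $(I - P_{LV})Lu = 0$, and $\|(I - P_{LV})Lw\| \leq \|Lw\| \leq \|w\|$ since $L^2 \preceq I$. Applying this lemma first with the $L$ above and $V = \operatorname{range}(N)$ strips off $L$ and replaces $P_{LN}$ by $P_N$; applying it again with $L' = \Sigma_2 \preceq I$ and $V = \operatorname{range}(\tilde G_{2,2})$ strips off $\Sigma_2$ and replaces $P_N$ by $P_{\tilde G_{2,2}}$, yielding $\|(I - QQ^*)A\| \leq \|(I - P_{\tilde G_{2,2}})[-B \; I]\|$. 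A sub-multiplicative bound using $\Sigma_1^{-1} \preceq I$ (which holds because $\sigma_k \geq 1$) then replaces $B$ with $\tilde G_{2,1}(R_1^*)^{-1}$. Plugging in the SVD $R_1^* = U_1 \Sigma_G V_1^*$, absorbing $U_1$ and a sign via orthogonal right-multiplications (which preserve operator norm), and defining $X_1 := \tilde G_{2,1}V_1$, $X_2 := \tilde G_{2,2}$, $\Sigma := \Sigma_G$---which by rotational invariance of the Gaussian and the independence of $G_1$ and $G_2$ have exactly the joint distribution prescribed in the theorem---gives $\|(I - QQ^*)A\| \leq W$ almost surely; undoing the scaling delivers $\|(I - QQ^*)A\| \leqd \sigma_{k+1} W$.

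For the lower bound, consider the family $A_t := tI_k \oplus I_{n-k}$. As $t \to \infty$, $M \to 0$ and $L \to I$ in the formula above, so $\|(I - QQ^*)A_t\| \to W$ almost surely in the canonical coupling. Combined with the a.s.\ upper bound $\|(I - QQ^*)A_t\| \leq W$, dominated convergence gives $\E\|(I - QQ^*)A_t\| \uparrow \E W$, and together with the upper bound this yields $\sup_A \E\|(I - QQ^*)A\|/\sigma_{k+1} = \E W$; choosing $t$ sufficiently large then produces a matrix $A$ satisfying $\|(I - QQ^*)A\| \geqd (1-\epsilon)\sigma_{k+1}W$ for any prescribed $\epsilon > 0$. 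The main obstacle I anticipate is extracting the clean isometric factorization $\begin{bmatrix} -M^*L \\ L \end{bmatrix}(I - P_{LN})L$ in the first step---it requires careful block linear algebra and the push-through identity above---after which the whole proof is driven by the single PSD contraction lemma.
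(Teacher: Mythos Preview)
Your argument is correct, and it follows a genuinely different route from the paper's.  The paper's engine is a \emph{monotonicity lemma}: if $\sigma(B)\le\sigma(A)$ entrywise then $\sigma(f(B,G))\leqd\sigma(f(A,G))$, proved by a derivative computation in the rank-one case (showing $\partial_i\|f(\operatorname{diag}(\sigma),g)x\|_2^2\ge 0$) and then extended to general $\ell$ via a chaining argument $f_G=f_{G_2}\circ f_{G_1}$.  Monotonicity immediately identifies $M(t)=tI_k\oplus I_{n-k}$ as the worst case, and a separate block computation (with an ``almost orthogonal'' perturbation lemma) evaluates the limit $t\to\infty$ as $W$.  Your route instead produces an \emph{exact} closed-form expression $\|(I-QQ^*)A\|=\|(I-P_{LN})L\Sigma_2[\,-\tilde G_{2,1}(R_1^*)^{-1}\ \ I\,]\|$ valid for every diagonal $A$, and then peels off the contractions $L\preceq I$ and $\Sigma_2\preceq I$ in two shots via the single lemma $\|(I-P_{LV})LX\|\le\|(I-P_V)X\|$.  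What you gain is directness: no induction on $\ell$, no derivative calculation, and an almost-sure inequality in a natural coupling rather than only stochastic dominance.  What the paper gains is a stronger intermediate statement (monotonicity of the \emph{full} singular-value vector, not just the top one), which is of independent interest and drives other results in the paper such as Theorem~\ref{cor:OUR_MIXEDNORM}.  Both approaches use the same limiting family $A_t=tI_k\oplus I_{n-k}$ for the lower bound; the paper's computation of that limit and yours coincide once $M\to 0$, $L\to I$.

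One small slip: in your displayed identity you set $B:=\tilde G_{2,1}(R_1^*)^{-1}\Sigma_1^{-1}$, but your own column-swap formula two lines earlier (correctly) has $\Sigma_2[\,-\tilde G_{2,1}(R_1^*)^{-1}\ \ I\,]$ with no $\Sigma_1^{-1}$, since $M\Sigma_1=\Sigma_2\tilde G_{2,1}(R_1^*)^{-1}$ cancels it.  So the ``sub-multiplicative bound using $\Sigma_1^{-1}\preceq I$'' step is simply unnecessary---after the two applications of your PSD lemma you already sit at $\|(I-P_{\tilde G_{2,2}})[\,-\tilde G_{2,1}(R_1^*)^{-1}\ \ I\,]\|$, which is $W$ after the SVD substitution.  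This does not affect correctness.  Also, the passage from almost-sure convergence $\|(I-QQ^*)A_t\|\to W$ to the distributional statement $\|(I-QQ^*)A_t\|\geqd(1-\epsilon)W$ for large $t$ deserves one more line (e.g.\ continuity of $F_W$ and uniform convergence of CDFs), but the paper is equally terse on this point.
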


The proof of the theorem is in Section
\ref{sec:newbounds_subsection}. To obtain very concrete bounds from
this theorem, one can imagine using Monte Carlo simulations by sampling
from $W$ to estimate the worst error the algorithm
commits. Alternatively, one could derive upper and lower bounds about
$W$ by analytic means. The corollary below is established in Section
\ref{sec:corollaries}.
\begin{corollary}[{\bf Nonasymptotic bounds}]
  \label{cor:newthm} With $W$ as in \eqref{eq:W} (recall
  $n \le m$),
\begin{equation}
\label{eq:newthm}
\sqrt{n-(k+p+2)} \, \E \|\Sigma^{-1}\| \le  \E W \le 1 +\left(\sqrt{n-k}+\sqrt{k}\right) \, \E \|\Sigma^{-1}\|.  
\end{equation}
\end{corollary}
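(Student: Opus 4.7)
The plan is to reduce everything to a block-matrix norm and handle the three independent ingredients $X_1$, $X_2$, $\Sigma$ separately. First, I would identify $f(I_{n-k},X_2) = I - QQ^* =: P$, where $Q$ is an orthonormal basis for the column range of $X_2$; almost surely $P$ has rank $n-k-p$. Since $PI_{n-k} = P$, the random variable rewrites as $W = \| [\, PX_1\Sigma^{-1} \;\; P\,] \|$, and the two bounds amount to sandwiching this block norm.

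For the upper bound, I would start from the identity $[A \;\; B][A \;\; B]^* = AA^* + BB^*$, which gives $\|[A \;\; B]\|^2 \le \|A\|^2 + \|B\|^2 \le (\|A\|+\|B\|)^2$. Applied with $A = PX_1\Sigma^{-1}$ and $B = P$, and using $\|P\|\le 1$ together with $\|PX_1\Sigma^{-1}\| \le \|X_1\|\,\|\Sigma^{-1}\|$, this yields the pointwise bound $W \le 1 + \|X_1\|\,\|\Sigma^{-1}\|$. Taking expectations, invoking the independence of $X_1$ and $\Sigma$, and applying the Davidson--Szarek bound $\E\|X_1\| \le \sqrt{n-k} + \sqrt{k}$ for the $(n-k)\times k$ Gaussian matrix $X_1$ produces the right-hand inequality in \eqref{eq:newthm}.

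For the lower bound, I would test $W$ against a single well-chosen column. Let $j$ be the index of the smallest diagonal entry of $\Sigma$, so that $\Sigma^{-1}e_j = \|\Sigma^{-1}\|\,e_j$. Restricting the block matrix to its first $k$ columns gives $W \ge \|PX_1\Sigma^{-1}\| \ge \|PX_1\Sigma^{-1}e_j\| = \|\Sigma^{-1}\| \cdot \|PX_1 e_j\|$. Now $X_1 e_j$ is the $j$-th column of $X_1$, hence a standard Gaussian vector in $\R^{n-k}$ that is independent of $X_2$ and $\Sigma$. Conditionally on $(X_2,\Sigma)$, the projection $PX_1 e_j$ is a standard Gaussian in $\mathrm{range}(P)$, a subspace of dimension $n-k-p$ almost surely, so $\|PX_1 e_j\| \sim \chi_{n-k-p}$ independently of $\Sigma$. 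Taking expectations using this independence yields $\E W \ge \E\|\Sigma^{-1}\| \cdot \E\chi_{n-k-p}$.

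The only remaining analytic input is a lower bound on $\E\chi_d$. I would apply the Gaussian Poincar\'e inequality to the $1$-Lipschitz map $g \mapsto \|g\|$ on $\R^d$, giving $\var(\chi_d) \le 1$ and hence $(\E\chi_d)^2 \ge \E\chi_d^2 - 1 = d-1 \ge d-2$, which is exactly what is needed for the left-hand inequality in \eqref{eq:newthm}. The main — rather minor — obstacle is bookkeeping for the conditioning: because $e_j$ is a random basis vector depending on $\Sigma$, one must iterate expectations in the order $X_1 \mid (X_2,\Sigma)$ in order to invoke Gaussian rotational invariance for the column $X_1 e_j$. Once the three-way independence of $X_1$, $X_2$, and $\Sigma$ is exploited in this way, both estimates separate cleanly and the corollary follows.
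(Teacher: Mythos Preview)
Your argument is correct and essentially identical to the paper's: the same splitting $W\le \|PX_1\Sigma^{-1}\|+\|P\|$ and $W\ge\|PX_1\Sigma^{-1}e_j\|$, the same use of independence and the Gordon/Davidson--Szarek bound for $\E\|X_1\|$, and the same observation that $\|PX_1e_j\|$ is (conditionally) a $\chi_{n-k-p}$ random variable. The only genuine difference is the last analytic step: the paper lower-bounds $\E\chi_d$ via the H\"older-type inequality $\E g\ge\sqrt{(\E g^2)^3/\E g^4}$ applied to $g=\chi_d$, obtaining $\E\chi_d\ge\sqrt{d}\,\sqrt{d/(d+2)}\ge\sqrt{d-2}$, whereas you use the Gaussian Poincar\'e inequality to get $(\E\chi_d)^2\ge d-1\ge d-2$. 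Your route is arguably cleaner and even yields a marginally sharper intermediate constant, but both land on the stated bound.
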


In the regime of interest where $n$ is very large and $k + p \ll n$,
the ratio between the upper and lower bound is practically equal to 1
so our analysis is very tight. Furthermore, Corollary \ref{cor:newthm}
clearly emphasizes why we would want to take $p > 0$. Indeed, when $p
= 0$, $\Sigma$ is square and nearly singular so that both $\E
\|\Sigma^{-1}\|$ and the lower bound become very large. In contrast,
increasing $p$ yields a sharp decrease in $\E \|\Sigma^{-1}\|$ and,
thus, improved performance.

It is further possible to derive explicit bounds by noting (Lemma
\ref{lem:pseudoInverseExpectedValue}) that 
\begin{equation}
\label{eq:Sigma_inv}
\frac{1}{\sqrt{p+1}} \leq \E  \|\Sigma^{-1}\|\leq 
e\frac{\sqrt{k+p}}{p}. 
\end{equation}
Plugging the right inequality into \eqref{eq:newthm} improves upon
\eqref{eq:HMT} from \cite{HMT}.  In the regime where $k + p \ll n$ (we
assume throughout this section that $n \le m$), taking $p = k$, for
instance, yields an upper bound roughly equal to $e \sqrt{2n/k}
\approx 3.84 \sqrt{n/k}$ and a lower bound roughly equal to
$\sqrt{n/k}$, see Figure \ref{fig:empirical}.

\newcommand{\goto}{\rightarrow} When $k$ and $p$ are reasonably large,
it is well-known (see Lemma \ref{lem:almostSureLaws}) that
\[
\sigma_{\text{min}}(\Sigma) \approx \sqrt{k+p} - \sqrt{k}
\]
so that in the regime of interest where $k + p \ll n$, both the lower
and upper bounds in \eqref{eq:newthm} are about equal to
\begin{equation}
  \label{eq:error-proxy}
\frac{\sqrt{n}}{\sqrt{k+p} - \sqrt{k}}. 
\end{equation}
We can formalize this as follows: in the limit of large dimensions
where $n \goto \infty$, $k,p \goto \infty$ with $p/k \goto \rho > 0$
(in such a way that $\limsup\,\, (k + p)/n < 1$), we have almost surely
\begin{equation}
  \label{eq:infinite-upper}
  \limsup\,\,  \frac{W}{\overline{b}(n,k,p)} \le 1,  \qquad \overline{b}(n,k,p) = \frac{\sqrt{n-k} + \sqrt{k}}{\sqrt{k + p} - \sqrt{k}}. 
\end{equation}
Conversely, it holds almost surely that 
\begin{equation}
  \label{eq:infinite-lower}
  \liminf \,\,  \frac{W}{\underline{b}(n,k,p)} \ge 1,  \qquad \underline{b}(n,k,p) = \frac{\sqrt{n - k - p}}{\sqrt{k + p} - \sqrt{k}}. 
\end{equation}
A short justification of this limit behavior may also be found in
Section \ref{sec:corollaries}.



\subsection{Innovations}
\label{sec:innovation}

Whereas the analysis in \cite{HMT} uses sophisticated concepts and
tools from matrix analysis and from perturbation analysis, our method
is different and only uses elementary ideas (for instance, it should
be understandable by an undergraduate student with no special
training). In a nutshell, the authors in \cite{HMT} control the error
of Algorithm \ref{alg} by establishing an upper bound about
$\|f(A,X)\|$ holding for all matrices $X$ (the bound depends on
$X$). From this, they deduce bounds about $\|f(A,G)\|$ in expectation
and in probability by integrating with respect to $G$. A limitation of
this approach is that it does not provide any estimate of how close
the upper bound is to being tight.

In contrast, we perform a sequence of reductions, which ultimately
identifies the worst-case input matrix. The crux of this reduction is
a monotonicity property, which roughly says that if the spectrum of a
matrix $A$ is larger than that of another matrix $B$, then the
singular values of the residual $f(A,G)$ are stochastically greater
than those of $f(B,G)$, see Lemma
\ref{lem:monotonicity_forward_statement} in Section
\ref{sec:monotonicity_forward_statement} for details. Hence, applying
the algorithm to $A$ results in a larger error than when the algorithm
is applied to $B$. In turn, this monotonicity property allows us to
write the worst-case residual in a very concrete form. With this
representation, we can recover the deterministic bound from \cite{HMT}
and immediately see the extent to which it is sub-optimal. Most
importantly, our analysis admits matching lower and upper bounds as
discussed earlier.

Our analysis of Algorithm \ref{alg}, presented in Section
\ref{sec:newbounds_subsection}, shows that the approximation error is
heavily affected by the spectrum of the matrix $A$ past its first
$k+1$ singular values.\footnote{To accommodate this, previous works
  also provide bounds in terms of the singular values of $A$ past
  $\sigma_{k+1}$.}  In fact, suppose $m \ge n$ and let $D_{n-k}$ be
the diagonal matrix of dimension $n-k$ equal to
$\operatorname{diag}(\sigma_{k+1}, \sigma_{k+2}, \ldots,
\sigma_n)$. Then our method show that the worst case error for
matrices with this tail spectrum is equal to the random variable
\[
W(D_{n-k}) =  \| f(D_{n-k}, X_2) 
\begin{bmatrix} X_1 \Sigma^{-1} & I_{n-k} \end{bmatrix} \|. 
\]
In turn, a very short argument gives the expected upper bound below: 
\begin{theorem}
 \label{cor:OUR_MIXEDNORM} 
 Take the setup of Theorem \ref{teo:HMT} and let $\sigma_i$ be the
 $i$th singular value of $A$. Then
\begin{equation}
\label{eq:mixed}
\E \|(I - QQ^*)A\| \le \Bigl(1 + \sqrt{\frac{k}{p-1}}\Bigr) \sigma_{k+1} + \E \|\Sigma^{-1}\|\, \sqrt{\sum_{i > k} \sigma_i^2}. 
\end{equation}
Substituting $\E \|\Sigma^{-1}\|$ with the upper bound in
\eqref{eq:Sigma_inv} recovers Theorem 10.6 from \cite{HMT}.
\end{theorem}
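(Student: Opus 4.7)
The plan is to take as starting point the refined distributional worst-case representation flagged just above the theorem: for any $A\in\R^{m\times n}$ whose tail singular values form the diagonal matrix $D_{n-k}=\operatorname{diag}(\sigma_{k+1},\dots,\sigma_n)$,
\[
\|(I-QQ^*)A\| \leqd W(D_{n-k}) = \|f(D_{n-k},X_2)\begin{bmatrix}X_1\Sigma^{-1} & I_{n-k}\end{bmatrix}\|,
\]
with $X_1,X_2,\Sigma$ independent and distributed as in Theorem \ref{newtheorem}. This reduction is obtained by exactly the same monotonicity-plus-reduction argument that underlies Theorem \ref{newtheorem}, retaining the full tail of singular values of $A$ rather than collapsing everything past $\sigma_{k+1}$. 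Everything that follows is a short calculation in expectation starting from $W(D_{n-k})$.

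First I would apply the elementary block triangle inequality $\|M\begin{bmatrix}A & B\end{bmatrix}\|\le \|MA\|+\|MB\|$ with $M=f(D_{n-k},X_2)$, splitting $W(D_{n-k})$ as
\[
W(D_{n-k}) \le \|f(D_{n-k},X_2)\, X_1\Sigma^{-1}\| + \|f(D_{n-k},X_2)\|.
\]
Since $f(D_{n-k},X_2) = (I-P)\,D_{n-k}$ for an orthogonal projector $P$ (depending only on $X_2$), one immediately gets the deterministic bounds $\|f(D_{n-k},X_2)\|\le \sigma_{k+1}$ and, by applying the projector columnwise, $\|f(D_{n-k},X_2)\|_F \le \|D_{n-k}\|_F = \bigl(\sum_{i>k}\sigma_i^2\bigr)^{1/2}$; the latter is what will ultimately introduce the Frobenius term in \eqref{eq:mixed}.

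For the first summand I would condition on $(X_2,\Sigma)$, so that $X_1$ is a standard Gaussian matrix independent of the fixed left multiplier $M:= f(D_{n-k},X_2)$ and right multiplier $N:= \Sigma^{-1}$, and invoke the classical Gaussian moment estimate
\[
\E\|MX_1N\| \le \|M\|\,\|N\|_F + \|M\|_F\,\|N\|,
\]
a standard consequence of Chevet's inequality. Taking expectations, using the independence of $X_2$ and $\Sigma$, and substituting the two deterministic bounds on $\|M\|$ and $\|M\|_F$ from the previous step then yields
\[
\E W(D_{n-k}) \le \sigma_{k+1} + \sigma_{k+1}\,\E\|\Sigma^{-1}\|_F + \Bigl(\sum_{i>k}\sigma_i^2\Bigr)^{1/2}\E\|\Sigma^{-1}\|.
\]

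The only remaining ingredient is $\E\|\Sigma^{-1}\|_F \le \sqrt{k/(p-1)}$, which follows from Jensen applied to the classical Wishart identity $\E\operatorname{tr}\bigl((G'{}^{*}G')^{-1}\bigr)=k/(p-1)$ for a $(k+p)\times k$ i.i.d.\ Gaussian $G'$ (whose singular values populate $\Sigma$). Inserting this gives exactly \eqref{eq:mixed}, and substituting in turn the upper estimate on $\E\|\Sigma^{-1}\|$ from \eqref{eq:Sigma_inv} reproduces Theorem 10.6 of \cite{HMT}. The only slightly nontrivial step in the plan is the Gaussian moment inequality for $\E\|MX_1N\|$; everything else is the block triangle inequality, independence, and bookkeeping around the worst-case representation.
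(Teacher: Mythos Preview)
Your proposal is correct and follows essentially the same route as the paper: start from the tail-spectrum representation $W(D_{n-k})$ obtained via monotonicity and \eqref{eq:keytoall}, split off $\|f(D_{n-k},X_2)\|$ by the block triangle inequality, bound the Gaussian term $\E_{X_1}\|f(D_{n-k},X_2)\,X_1\,\Sigma^{-1}\|$ by the two-norm inequality $\|M\|\,\|N\|_F+\|M\|_F\,\|N\|$, and finish with the Wishart identity $\E\|\Sigma^{-1}\|_F^2=k/(p-1)$ together with $\sigma(f(D_{n-k},X_2))\le\sigma(D_{n-k})$. The only cosmetic difference is that the paper names the Gaussian moment bound ``Gordon's inequality'' (following \cite{HMT}) rather than Chevet's.
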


This bound is tight in the sense that setting $\sigma_{k+1} =
\sigma_{k+2} = \ldots = \sigma_n = 1$ essentially yields the upper
bound from Corollary \ref{cor:newthm}, which as we have seen, cannot
be improved. 

\subsection{Experimental results}
\label{numerics}

To examine the tightness of our analysis of performance, we apply
Algorithm \ref{alg} to the `worst-case' input matrix and compute the
spectral norm of the residual, performing such computations for fixed
values of $m$, $n$, $k$ and $p$.  We wish to compare the sampled
errors with our deterministic upper and lower bounds, as well as with
the previous upper bound from Theorem \ref{teo:HMT} and our error
proxy \eqref{eq:error-proxy}.  Because of Lemma \ref{MANINV}, the
worst-case behavior of the algorithm does not depend on $m$ and $n$
separately but only on $\min(m,n)$. Hence, we set $m=n$ in this
section.


\begin{figure}[h!]
        \centering
        \begin{subfigure}[b]{0.70\textwidth}
                \centering
 \caption{$k = p = 0.01 \, n$}
                \includegraphics[width=\textwidth]{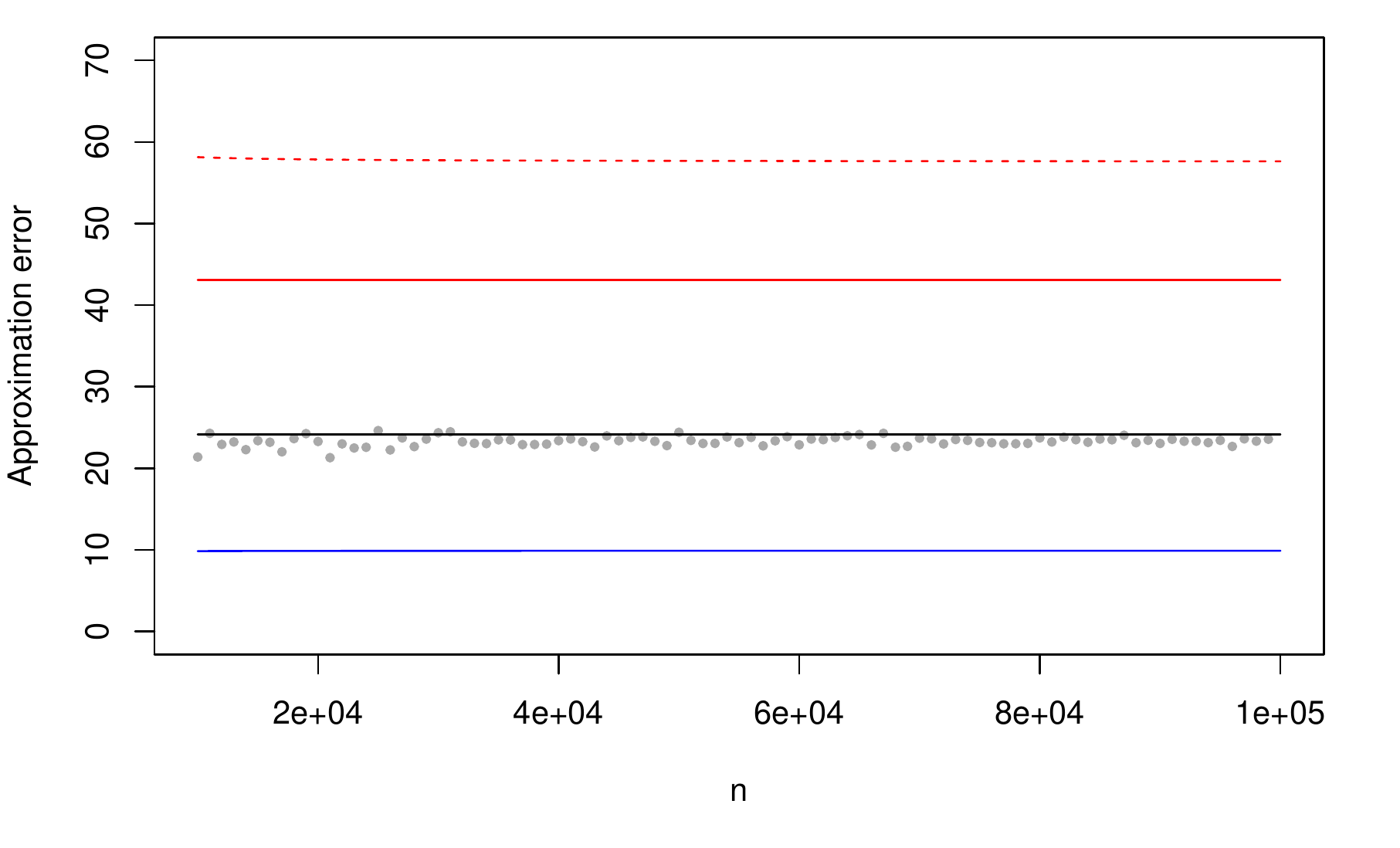}
                               \label{fig:kpfixedratio}
        \end{subfigure}
                \begin{subfigure}[b]{0.70\textwidth}
                \centering
 \caption{$k = p = 100$}
                \includegraphics[width=\textwidth]{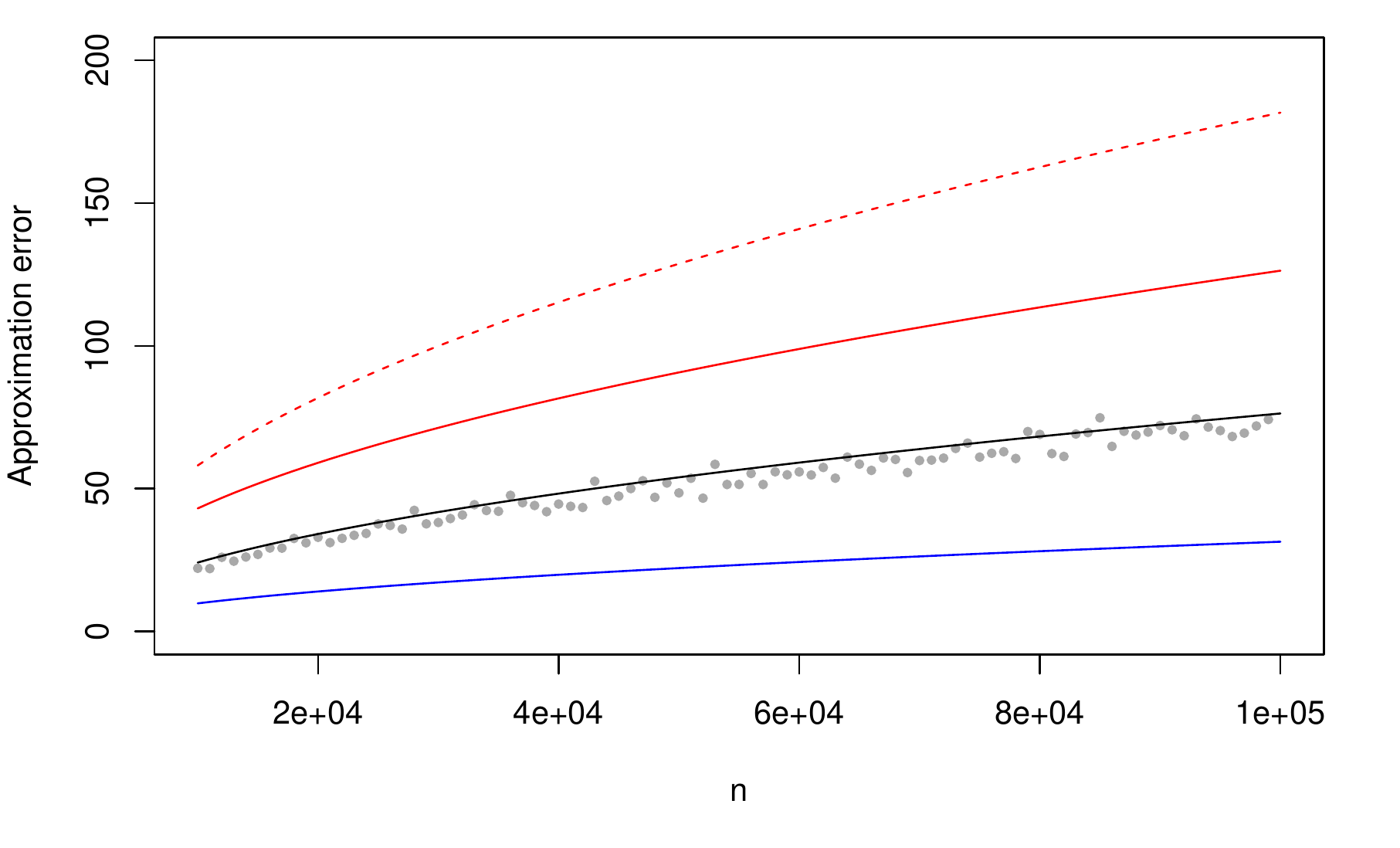}
                               \label{fig:kpvariedratio}
        \end{subfigure}
        \caption{Spectral norm $\|(I-QQ^*)A\|$ of the residual with
          worst-case matrix of dimension $n \times n$ as input with
          $n$ varying between $10^4$ and $10^5$. Each grey dot
          represents the error of one run of Algorithm \ref{alg}. The
          lines are bounds on the spectral norm: the red dashed line
          plots the previous upper bound \eqref{eq:HMT}. The red
          (resp.~blue) solid line is the upper (resp.~lower) bound
          combining Corollary \ref{cor:newthm} and
          \eqref{eq:Sigma_inv}.  The black line is the error proxy
          \eqref{eq:error-proxy}. In the top plot, $k = p = 0.01 \,
          n$.  Keeping fixed ratios results in constant error. Holding
          $k$ and $p$ fixed in the bottom plot while increasing $n$
          results in approximations of increasing error.}
          \label{fig:empirical}
\end{figure}

Figure \ref{fig:empirical} reveals that the new upper and lower bounds
are tight up to a small multiplicative factor, and that the previous
upper bound is also fairly tight. Further, the plots also demonstrate
the effect of concentration in measure---the outcomes of different
samples each lie remarkably close to the yellow rule of thumb,
especially for larger $n$, suggesting that for practical
purposes the algorithm is deterministic.  Hence, these experimental
results reinforce the practical accuracy of the error proxy
\eqref{eq:error-proxy} in the regime $k+p \ll n$ since we can see that
the worst-case error is just about \eqref{eq:error-proxy}.

\begin{figure}[h!]
        \centering
        \begin{subfigure}[b]{0.49\textwidth}
                \centering
                \caption{$m=n=10^5, k = p = 10^2$}
\vspace*{-1cm}
                \includegraphics[width=\textwidth]{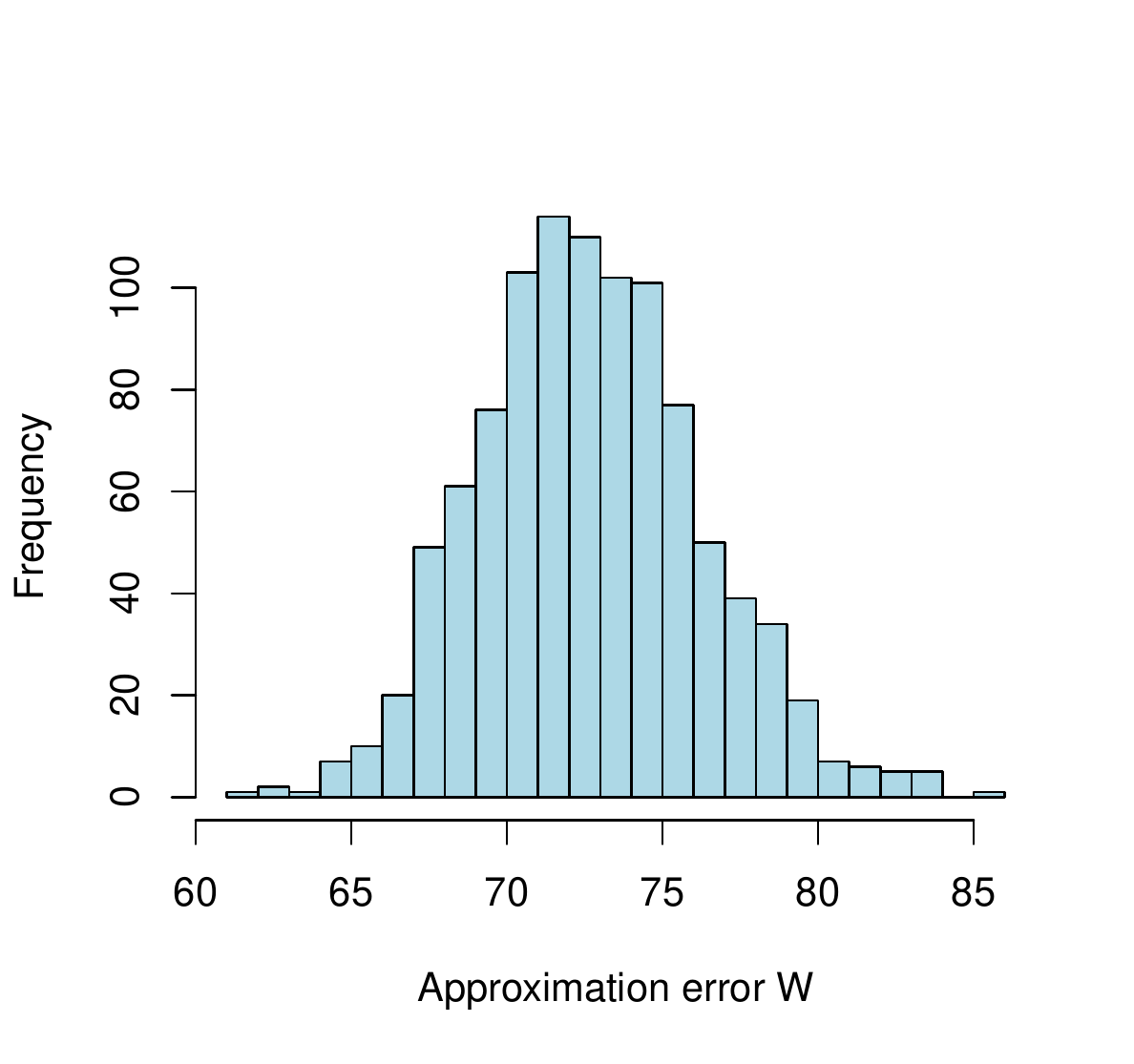}
                \label{fig:samples,n=1e5,k=1e2,p=1e2}
        \end{subfigure}
        \begin{subfigure}[b]{0.49\textwidth}
                \centering
                \caption{$m=n=10^5, k = p = 10^3$}
\vspace*{-1cm}
                \includegraphics[width=\textwidth]{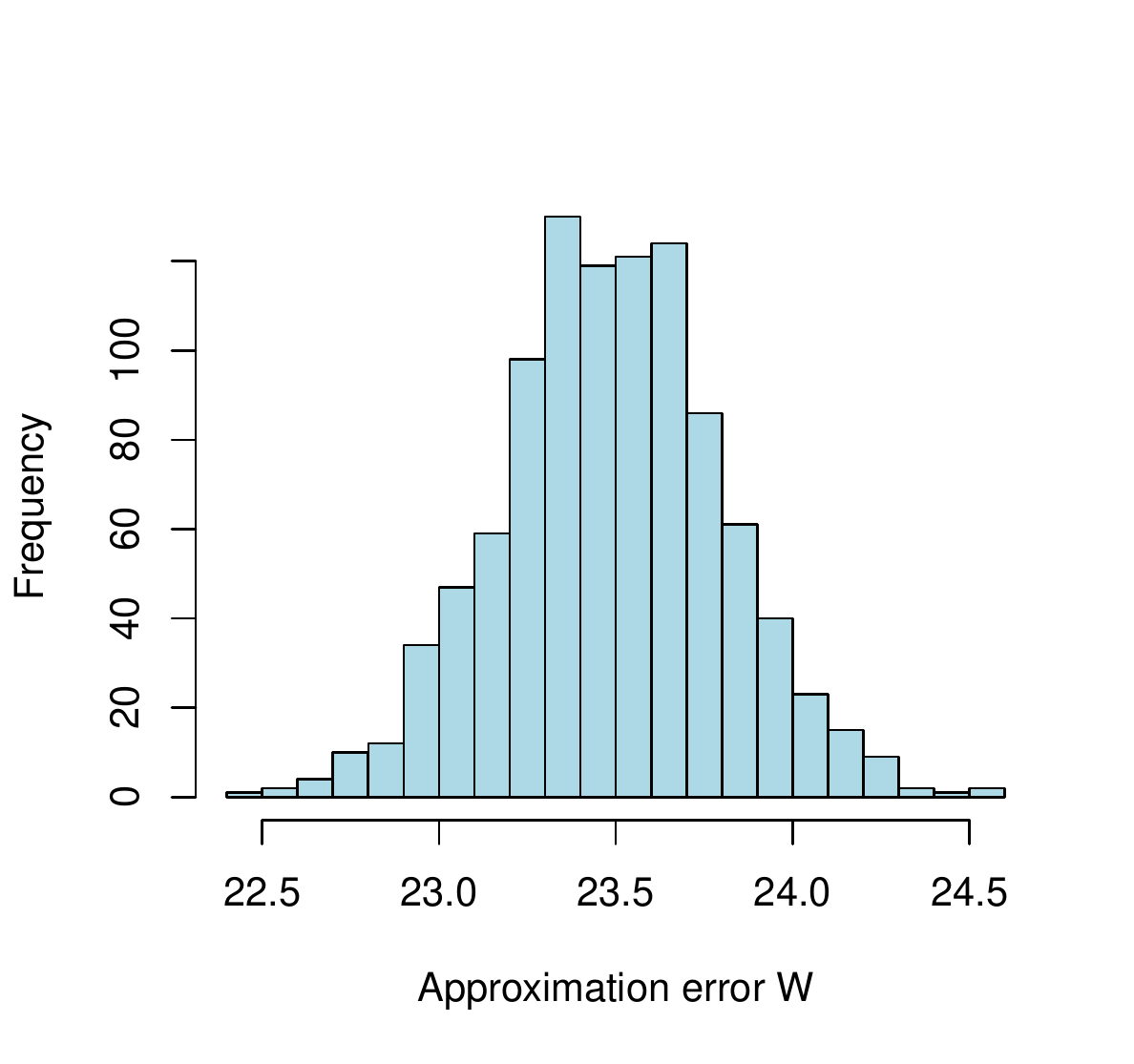}
                \label{fig:samples,n=1e5,k=1e3,p=1e3}
        \end{subfigure}
        \caption{We fix $n$ (recall that $m = n$), $k$ and $p$ and
          plot the approximation error $W$ across $1000$ independent
          runs of the algorithm.  For the larger value of $k$ and $p$,
          we see reduced variability---the approximation errors range
          from about $61$ to $85$ for $k=p=10^2$ and $22.5$ to $24.5$
          for $k=p=10^3$, much less variability in both absolute and
          percentage terms. Similarly, the empirical standard deviation
          with $k=p=10^2$ is approximately $3.6$ while with $k=p=10^3$
          it falls to $.32$. }
          \label{fig:empirical_samples}
\end{figure}

Figure \ref{fig:empirical_samples} gives us a sense of the variability
of the algorithm for two fixed values of the triple $(n, k, p)$.  As
expected, as $k$ and $p$ grow the variability of the algorithm
decreases, demonstrating the effect of concentration of measure.

\subsection{Complements}
\label{sec:complements}

In order to make the paper a little more self-contained, we briefly
review some of the literature as to explain the centrality of the
range finding problem (Algorithm \ref{alg}). For instance, suppose we
wish to construct an approximate SVD of a very large matrix $m \times
n$ matrix $A$. Then this can be achieved by running Algorithm
\ref{alg} and then computing the SVD of the `small' matrix $C = Q^*A$,
check Algorithm \ref{alg-svd} below, which returns an approximate SVD
$A \approx U \Sigma V^*$. Assuming the extra steps in Algorithm
\ref{alg-svd} are exact, we have
\begin{equation}
  \label{eq:svd}
  \|A - U \Sigma V^*\| = \|A - Q\hat U \Sigma V^*\| = \|A - QC\| = \|A - QQ^*A\|
\end{equation}
so that the approximation error is that of Algorithm \ref{alg} whose
study is the subject of this paper.

\begin{algorithm}
\caption{Randomized algorithm for approximate SVD computation}
\label{alg-svd}
\begin{algorithmic}[1]
  \Require Input: $m \times n$ matrix $A$ and desired rank $\ell$.
  \Statex Run Algorithm \ref{alg}.
\Statex Compute $C =  Q^*A$.
\Statex Compute the SVD of $C = \hat U \Sigma V^*$.
\Statex \textbf{return} the approximation $U = Q\hat U$, $\Sigma$, $V$.
\end{algorithmic}
\end{algorithm}

The point here of course is that since the matrix $C = Q^*A$ is $\ell
\times n$---we typically have $\ell \ll n$---the computational cost of
forming its SVD is on the order of $O(\ell^2 n)$ flops and fairly
minimal. (For reference, we note that there is an even more effective
single-pass variant of Algorithm \ref{alg-svd} in which we do not
need to re-access the input matrix $A$ once $Q$ is available, please
see \cite{HMT} and references therein for details.)

Naturally, we may wish to compute other types of approximate matrix
factorizations of $A$ such as eigenvalue decompositions, QR
factorizations, interpolative decompositions (where one searches for
an approximation $A \approx BC$ in which $B$ is a subset of the
columns of $A$), and so on. All such computations would follow the
same pattern: (1) apply Algorithm \ref{alg} to find an approximate
range, and (2) perform classical matrix factorizations on a matrix of
reduced size. This general strategy, namely, approximation followed by
standard matrix computations, is of course hardly anything new. To be
sure, the classical Businger-Golub algorithms for computing partial QR
decompositions follows this pattern. Again, we refer the interested
reader to the survey \cite{HMT}.

We have seen that when the input matrix $A$ does not have a rapidly
decaying spectrum, as this may be the case in a number of data analysis
applications, the error of approximation Algorithm \ref{alg}
commits---the random variable $W$ in Theorem \ref{newtheorem}---may be
quite large. In fact, when the singular values hardly decay at all, it
typically is on the order of the error proxy
\eqref{eq:error-proxy}. This results in poor performance. On the other
hand, when the singular values decay rapidly, we have seen that the
algorithm is provably accurate, compare Theorem
\ref{cor:OUR_MIXEDNORM}.  This suggests using a power iteration,
similar to the block power method, or the subspace iteration in
numerical linear algebra: Algorithm \ref{alg-power} was proposed in
\cite{RokhlinTygert09}.

\begin{algorithm}
  \caption{Randomized algorithm with power trick for matrix
    approximation}
\label{alg-power}
\begin{algorithmic}[1]
  \Require Input: $m \times n$ matrix $A$ and desired rank $\ell$.
  \Statex Sample an $n \times \ell$ test matrix $G$ with independent
  mean-zero, unit-variance Gaussian entries.   
\Statex Compute $H =  (AA^*)^q AG$.  
\Statex Construct $Q \in \R^{m \times \ell}$ with columns forming
  an orthonormal basis for the range of $H$.
\Statex \textbf{return} the approximation $B = Q$, $C = Q^*A$.
\end{algorithmic}
\end{algorithm}

The idea in Algorithm \ref{alg-power} is of course to turn a slowly
decaying spectrum into a rapidly decaying one at the cost of more
computations: we need $2q+1$ matrix-matrix multiplies instead of just
one. The benefit is improved accuracy. Letting $P$ be any orthogonal
projector, then a sort of Jensen inequality states that for any matrix
$A$, 
\[
\|PA\| \le \|P(AA^*)^{q}A\|^{1/(2q+1)}, 
\] 
see \cite{HMT} for a proof.  Therefore, if $Q$ is computed via the
power trick (Algorithm \ref{alg-power}), then 
\[
\|(I-QQ^*)A\| \le \|(I-QQ^*)(AA^*)^{q}A\|^{1/(2q+1)}.
\]
This immediately gives a corollary to Theorem \ref{newtheorem}. 
\begin{corollary}
\label{cor:power}
Let $A$ and $W$ be as in Theorem \ref{newtheorem}. Applying Algorithm
\ref{alg-power} yields
\begin{equation}
\label{eq:power}
  \|(I - QQ^*)A\|\leqd \sigma_{k+1} \, W^{1/(2q+1)}. 
\end{equation}
\end{corollary}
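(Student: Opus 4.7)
The plan is to combine two ingredients that are already in hand: the stochastic upper bound of Theorem \ref{newtheorem} applied not to $A$ itself but to the matrix $\tilde A := (AA^*)^q A$, and the deterministic operator-norm inequality $\|PA\| \le \|P(AA^*)^q A\|^{1/(2q+1)}$ (valid for any orthogonal projector $P$) quoted in the paragraph preceding the corollary.

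First I would observe that the matrix $Q$ produced by Algorithm \ref{alg-power} is precisely the matrix that Algorithm \ref{alg} would produce if it were fed $\tilde A$ in place of $A$: by construction, $Q$ is an orthonormal basis for the range of $H = (AA^*)^q AG = \tilde A G$. Since the singular values of $\tilde A$ are $\sigma_i(\tilde A) = \sigma_i(A)^{2q+1}$, an application of Theorem \ref{newtheorem} to $\tilde A$ yields
\begin{equation*}
\|(I - QQ^*)\tilde A\| \;\leqd\; \sigma_{k+1}(\tilde A)\, W \;=\; \sigma_{k+1}^{2q+1}\, W,
\end{equation*}
with $W$ the random variable defined in \eqref{eq:W}; the distribution of $W$ depends only on $(n,k,p)$ and in particular is the same as in Theorem \ref{newtheorem} applied to $A$.

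Next I would invoke the Jensen-type inequality with $P = I - QQ^*$ to obtain the deterministic bound
\begin{equation*}
\|(I - QQ^*)A\| \;\le\; \|(I - QQ^*)\tilde A\|^{1/(2q+1)}.
\end{equation*}
Because the map $x \mapsto x^{1/(2q+1)}$ is nondecreasing on $[0,\infty)$, it preserves stochastic order; applying it to both sides of the bound from Theorem \ref{newtheorem} then gives
\begin{equation*}
\|(I - QQ^*)A\| \;\leqd\; \bigl(\sigma_{k+1}^{2q+1}\, W\bigr)^{1/(2q+1)} \;=\; \sigma_{k+1}\, W^{1/(2q+1)},
\end{equation*}
which is \eqref{eq:power}.

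There is no real obstacle here; both building blocks are provided in the excerpt and the argument is a short chain of reductions. The only point deserving a line of care is the passage from a deterministic pointwise inequality to a stochastic-ordering inequality, which is handled by the monotonicity of $x \mapsto x^{1/(2q+1)}$ and the fact that $X \leqd Y$ together with $\varphi$ nondecreasing implies $\varphi(X) \leqd \varphi(Y)$.
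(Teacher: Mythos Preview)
Your proposal is correct and follows exactly the route the paper takes: apply Theorem \ref{newtheorem} to $\tilde A=(AA^*)^qA$ (using that $\sigma_{k+1}(\tilde A)=\sigma_{k+1}^{2q+1}$ and that $W$ depends only on $(n,k,p)$), then use the quoted projector inequality $\|PA\|\le\|P(AA^*)^qA\|^{1/(2q+1)}$ with $P=I-QQ^*$ and the monotonicity of $x\mapsto x^{1/(2q+1)}$ to push through the stochastic ordering. Your explicit justification of the last step is a nice touch the paper leaves implicit.
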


It is amusing to note that with, say, $n = 10^9$, $k = p = 200$, and
the error proxy \eqref{eq:error-proxy} for $W$, the size of the error
factor $W^{1/(2q+1)}$ in \eqref{eq:power} is about 3.41 when $q = 3$.

Further, we would like to note that our analysis exhibits a sequence
of matrices \eqref{eq:key} that have approximation errors that limit
to the worst case approximation error when $q=0$.  However, when
$q=1$, this same sequence of matrices limits to having an
approximation error exactly equal to $1$, which is the best possible
since this is the error achieved by truncating the SVD. It would be
interesting to study the tightness of the upper bound \eqref{eq:power}
and we leave this to future research.


\subsection{Notation}

In Section \ref{sec:discussion}, we shall see that among all possible
test matrices, Gaussian inputs are in some sense optimal. Otherwise,
the rest of the paper is mainly devoted to proving the novel results
we have just presented.  Before we do this, however, we pause to
introduce some notation that shall be used throughout. We reserve
$I_n$ to denote the $n \times n$ identity matrix. When $G$ is an $n
\times \ell$ random Gaussian matrix, we write $A_{(\ell)} = f(A,G)$ to
save space. Hence, $A_{(\ell)}$ is a random variable.

We use the partial ordering of $n$-dimensional vectors and write $x
\ge y$ if $x - y$ has nonnegative entries.  Similarly, we use the
semidefinite (partial) ordering and write $A \succeq B$ if $A-B$ is
positive semidefinite. We also introduce a notion of stochastic
ordering in $\R^n$: given two $n$-dimensional random vectors $z_1$ and
$z_2$, we say that $z_1 \geqd z_2$ if $\P(z_1 \geq x) \geq \P(z_2 \geq
x)$ for all $x \in \R^n$. If instead $\P(z_1 \geq x) = \P(z_2 \geq x)$
then we say that $z_1$ and $z_2$ are equal in distribution and write
$z_1 \eqd z_2$. A function $g: \R^n \rightarrow \R$ is monotone
non-decreasing if $x \geq y$ implies $g(x) \geq g(y)$. Note that if
$g$ is monotone non-decreasing and $z_1 \geqd z_2$, then $\E g(z_1)
\geq \E g(z_2)$.

\section{Proofs}
\label{sec:newbounds}

\subsection{Monotonicity}
\label{sec:monotonicity_forward_statement}

The key insight underlying our analysis is this:
\begin{lemma}[{\bf Monotonicity}] 
  \label{lem:monotonicity_forward_statement}
  If $A$ and $B$ are matrices of the same dimensions obeying
  $\sigma(B) \leq \sigma(A)$, then 
\[
\sigma\bigl(B_{(\ell)} \bigr) \leqd
  \sigma \bigl(A_{(\ell)} \bigr). 
\]
\end{lemma}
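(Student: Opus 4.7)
My plan is to realize the stochastic dominance through a pointwise coupling on a single Gaussian $G$, by combining a rotational invariance reduction with a variational identity for the residual.

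\textbf{Reduction to diagonal matrices.} Writing $A = U_A \Sigma_A V_A^*$ (SVD), the Gaussian rotational invariance $V_A^* G \eqd G$ gives $\operatorname{range}(AG) \eqd U_A \cdot \operatorname{range}(\Sigma_A G)$, and since left multiplication by the unitary $U_A$ preserves singular values of the residual, $\sigma(f(A,G)) \eqd \sigma(f(\Sigma_A, G))$; the analogous statement holds for $B$. Because the claim $\sigma(B_{(\ell)}) \leqd \sigma(A_{(\ell)})$ is a statement about marginal distributions, it suffices to compare $\sigma(f(\Sigma_A, G))$ and $\sigma(f(\Sigma_B, G))$ using the \emph{same} Gaussian $G$, so I may assume outright that $A$ and $B$ are the rectangular diagonal matrices of their singular values with $\sigma_i(A) \ge \sigma_i(B)$ for every $i$. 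In that setup the Gram matrices are diagonal with the squared singular values on the diagonal, so the componentwise hypothesis upgrades to the Loewner inequality $B^*B \preceq A^*A$.

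\textbf{Variational identity.} For every matrix $A$, every $G$, and every vector $x$,
\[
\|f(A,G)\,x\|^2 \;=\; \operatorname{dist}\bigl(Ax,\operatorname{range}(AG)\bigr)^2 \;=\; \min_{z \in \R^\ell}(x - Gz)^*(A^*A)(x - Gz),
\]
which merely restates that $(I - QQ^*)Ax$ is the residual obtained when $Ax$ is projected onto $\operatorname{range}(AG) = \{AGz : z \in \R^\ell\}$. Monotonicity in $A^*A$ is now transparent: if $M_1 \succeq M_2 \succeq 0$, then $(x-Gz)^*M_1(x-Gz) \ge (x-Gz)^*M_2(x-Gz)$ pointwise in $z$, so the minima satisfy the same inequality. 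Applying this with $M_i = A_i^* A_i$ and the shared $G$ gives $f(A,G)^* f(A,G) \succeq f(B,G)^* f(B,G)$ in Loewner order, almost surely.

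\textbf{From Loewner ordering to singular values.} A Loewner inequality between PSD matrices implies the corresponding inequality between eigenvalues by Courant--Fischer, so $\sigma_i(f(A,G)) \ge \sigma_i(f(B,G))$ for every $i$, pointwise on the common probability space. This almost-sure coordinatewise ordering is a coupling that realizes the stochastic ordering $\sigma(B_{(\ell)}) \leqd \sigma(A_{(\ell)})$ claimed in the lemma. I do not anticipate a genuine obstacle: the whole argument is elementary once one spots the variational characterization of the residual, which is the single conceptual step of the proof and which converts the multiplicative-looking hypothesis on singular values into an honest Loewner monotonicity.
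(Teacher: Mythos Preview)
Your proof is correct and takes a genuinely different route from the paper's. Both arguments begin with the same rotational-invariance reduction to diagonal matrices and both end with the same Loewner-to-singular-values step (the paper's Lemma~\ref{hornandjohnsoninequality}). The difference lies in how the pointwise inequality $\|f(\Sigma_1,G)x\|_2 \ge \|f(\Sigma_2,G)x\|_2$ is established. The paper first treats the rank-one case $\ell=1$ by writing out $h(\sigma)=\|f(\operatorname{diag}(\sigma),g)x\|_2^2$ explicitly and checking $\partial_i h \ge 0$ by hand, and then bootstraps to general $\ell$ via a separate argument (Lemma~\ref{lem:strongestmonotonicity}) that locates, for each $x$, a single vector $g$ in the range of $G$ with $f(\Sigma_1,G)x=f(\Sigma_1,g)x$ and invokes a Pythagorean comparison. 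Your variational identity $\|f(A,G)x\|^2 = \min_{z}(x-Gz)^*A^*A(x-Gz)$ collapses both steps into one line: once $A^*A \succeq B^*B$, the integrand is pointwise monotone in the Loewner order, so the minima are ordered, and this works for all $\ell$ simultaneously with no calculus and no induction. Your approach is shorter and more conceptual; the paper's has the minor advantage of making the rank-one mechanism completely explicit, but at the cost of a two-stage structure that your identity renders unnecessary.
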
 
The implications of this lemma are two-fold. First, to derive error
estimates, we can work with diagonal matrices without any loss of
generality. Second and more importantly, it follows from the
monotonicity property that
\begin{equation}
\label{eq:key}
\sup_{A \in \R^{m\times n}} \E \| f(A,G) \| / \sigma_{k+1} = \lim_{t \rightarrow \infty} \E \| f(M(t), G)\|,  \quad M(t) = \begin{bmatrix}tI_k&0\\ 0&  
I_{n-k}\end{bmatrix}. 
\end{equation}
The rest of this section is thus organized as follows:
\begin{itemize}
\item The proof of the monotonicity lemma is in Section
  \ref{sec:monotonicity_proof_section}.

\item To prove Theorem \ref{newtheorem}, it suffices to consider a
  matrix input as $M(t)$ in \eqref{eq:key} with $t \rightarrow
  \infty$. This is the object of Section
  \ref{sec:newbounds_subsection}.

\item Bounds on the worst case error (the proof of Corollary
  \ref{cor:newthm}) are given in Section \ref{sec:corollaries}.

\item The mixed norm Theorem \ref{cor:OUR_MIXEDNORM} is proved in
  Section \ref{sec:mixed}.
\end{itemize}
Additional supporting materials are found in the Appendix.

\subsection{Proof of the monotonicity property}
\label{sec:monotonicity_proof_section}

We begin with an intuitive lemma. 
\begin{lemma}[{\bf Rotational invariance}] 
  \label{MANINV}
  Let $A \in \R^{m \times n}$, $U \in \R^{m \times m}$, $V \in \R^{n
    \times n}$ be arbitrary matrices with $U$ and $V$ orthogonal. Then
  $\sigma(A_{(\ell)}) \eqd \sigma((UAV)_{(\ell)})$. In particular, if
  $A = U \Sigma V^*$ is a singular value decomposition of $A$, then
  $\|A_{(\ell)} \| \eqd \|\Sigma_{(\ell)} \|$.
\end{lemma}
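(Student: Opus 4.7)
The plan is to exploit two facts: the rotational invariance in distribution of a Gaussian matrix (multiplying i.i.d.\ $\mathcal{N}(0,1)$ entries by an orthogonal matrix on the left preserves the distribution), and the fact that orthogonal transformations commute nicely with orthogonal projectors onto ranges.

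Set $B = UAV$ and consider running Algorithm \ref{alg} on $B$ with some Gaussian test matrix $G'$. Then $BG' = UAVG'$, and I would introduce the change of variables $\bar{G} = VG'$. Since the columns of $G'$ are i.i.d.\ $\mathcal{N}(0, I_n)$, and $V$ is orthogonal, the columns of $\bar{G}$ are i.i.d.\ $\mathcal{N}(0, VV^*) = \mathcal{N}(0, I_n)$, so $\bar{G} \eqd G$. Hence $BG' = UA\bar{G}$ with $\bar{G}$ a legitimate Gaussian test matrix for $A$.

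Next, if $Q$ is any matrix whose columns form an orthonormal basis for the range of $A\bar{G}$, then the columns of $UQ$ form an orthonormal basis for the range of $UA\bar{G} = BG'$, because $U$ is orthogonal (so it preserves orthonormality) and $U \cdot \operatorname{range}(A\bar{G}) = \operatorname{range}(UA\bar{G})$. The corresponding projector is therefore $(UQ)(UQ)^* = UQQ^*U^*$, which is well-defined independently of the particular choice of orthonormal basis. A direct calculation then gives
\begin{equation*}
(I - UQQ^*U^*) B = U(I - QQ^*)U^* U A V = U \bigl((I - QQ^*)A\bigr) V = U A_{(\ell)}(\bar{G}) \, V.
\end{equation*}
Since singular values are invariant under multiplication by orthogonal matrices on either side, $\sigma(B_{(\ell)}(G')) = \sigma(U A_{(\ell)}(\bar{G}) V) = \sigma(A_{(\ell)}(\bar{G}))$. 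Combined with $\bar{G} \eqd G$, this gives $\sigma(B_{(\ell)}) \eqd \sigma(A_{(\ell)})$, which is the claim.

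For the ``in particular'' clause, I would apply what was just proved to the pair $(A, U, V)$ where $A = U\Sigma V^*$ is an SVD: then $\Sigma = U^* A V$, and the first part (with $U$ replaced by $U^*$ and $V$ replaced by $V$) yields $\sigma(\Sigma_{(\ell)}) \eqd \sigma(A_{(\ell)})$, which specializes to $\|A_{(\ell)}\| \eqd \|\Sigma_{(\ell)}\|$. There is no real obstacle here; the only point that requires a moment of care is verifying that the projector $UQQ^*U^*$ does not depend on the arbitrary choice of orthonormal basis $Q$, which follows because $QQ^*$ is the unique orthogonal projector onto $\operatorname{range}(A\bar{G})$.
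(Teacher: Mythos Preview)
Your proof is correct and rests on the same two ingredients as the paper's—rotational invariance of the Gaussian test matrix and the behavior of orthogonal projectors under orthogonal change of basis—but the organization differs. The paper first treats the rank-one case $\ell=1$ by an explicit computation with the projector $I - \frac{(UAVg)(UAVg)^*}{\|UAVg\|^2}$, showing $(UAV)_{(1)} \eqd U A_{(1)} V$, and then passes to general $\ell$ by induction, invoking the chaining identity $f(A,G) = f(f(A,G_1),G_2)$ (Lemma~\ref{lem:chaining}). You instead handle arbitrary $\ell$ in one step by observing directly that if $Q$ is an orthonormal basis for $\operatorname{range}(A\bar G)$ then $UQ$ is one for $\operatorname{range}(UA\bar G)$, so the projector transforms as $UQQ^*U^*$ and the residual as $U\,f(A,\bar G)\,V$. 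Your route is shorter and avoids the need for the chaining lemma altogether; the paper's route has the minor side benefit of making the chaining identity explicit, which is conceptually pleasant even if not strictly required for this lemma.
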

\begin{proof}
  Observe that by spherical symmetry of the Gaussian distribution, a
  test vector $g$ (a column of $G$) has the same distribution as $r =
  Vg$. We have
\[
(UAV)_{(1)}=UAV - \frac{UAVg (UAVg)^* UAV}{\|UAVg\|_2^2}
\]
and thus,
\begin{align*}
  (UAV)_{(1)}  &= UAV - \frac{UA r r^* A^* A V }{\|Ar\|_2^2} \\
  &= U \left( A - \frac{A r r^* A^* A}{\|Ar\|_2^2} \right) V\\
  &\eqd U A_{(1)} V.
\end{align*}
Hence, $\sigma((UAV)_{(1)}) \eqd \sigma(U A_{(1)} V) = \sigma(
A_{(1)})$, which means that the distribution of $\sigma(A_{(1)})$
depends only upon $\sigma(A)$. By induction, one establishes
$\sigma((UAV)_{(\ell)}) \eqd \sigma( U A_{(\ell)} V) =
\sigma(A_{(\ell)})$ in the same fashion. The induction step uses Lemma
\ref{lem:chaining}, which states that $f(A, G) = f(f(A, G_1), G_2)$,
where $G_1$ and $G_2$ are a partition of the columns of $G$.
\end{proof}

A consequence of this lemma is that we only need to show the
monotonicity property for pairs of diagonal matrices obeying $\Sigma_1
\succeq \Sigma_2$. The lemma below proves the monotonicity property in
the special case where $\ell = 1$.
\begin{lemma} \label{deterministicinequality} Let $\Sigma_1$ and
  $\Sigma_2$ be $n \times n$ diagonal and positive semidefinite
  matrices obeying $\Sigma_1 \succeq \Sigma_2$. Let $g$ be an
  arbitrary vector in $\R^n$. Then for all $x$, 
\[
\|f(\Sigma_1,g) x\|_2 \geq \|f(\Sigma_2, g) x\|_2.
\] 
This implies that $\sigma \left(f(\Sigma_1, g) \right) \geq \sigma
\left( f(\Sigma_2, g) \right)$.
\end{lemma}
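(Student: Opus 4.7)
The plan is to establish the pointwise inequality $\|f(\Sigma_1, g) x\|_2 \ge \|f(\Sigma_2, g) x\|_2$ for every $x$; the singular-value statement then follows from the Courant--Fischer min-max formula $\sigma_k(A) = \max_{\dim V = k}\min_{x \in V,\ \|x\|=1}\|Ax\|$, since a pointwise inequality in $x$ is preserved by both the inner minimum (over any fixed subspace) and the outer maximum. I fix $g$ and $x$ and treat $\Phi(\Sigma) := \|f(\Sigma, g)x\|_2^2$ as a function on the cone of diagonal PSD matrices. Since $\Sigma_1 \succeq \Sigma_2$ with both matrices diagonal PSD is equivalent to the componentwise inequality $(\Sigma_1)_{jj} \ge (\Sigma_2)_{jj} \ge 0$, it suffices to show $\Phi$ is nondecreasing in each diagonal entry; applying this along the straight-line path $t \mapsto (1-t)\Sigma_2 + t\Sigma_1$ then yields $\Phi(\Sigma_1) \ge \Phi(\Sigma_2)$.

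To carry this out I would use the explicit formula $f(\Sigma, g) = (I-P)\Sigma$ with $P$ the orthogonal projection onto $\spn(\Sigma g)$, giving
\[
\Phi(\Sigma) = \sum_j a_j x_j^2 - \frac{S^2}{T}, \qquad a_j := \Sigma_{jj}^2,\ S := \sum_j a_j g_j x_j,\ T := \sum_j a_j g_j^2,
\]
valid whenever $T > 0$. The central calculation is then the partial derivative with respect to any $a_k$. A direct differentiation should produce the clean identity
\[
\frac{\partial \Phi}{\partial a_k} = \left(x_k - \frac{S g_k}{T}\right)^2 \ge 0,
\]
proving coordinate-wise monotonicity on the interior $\{T>0\}$. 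This covers the generic case where both $\Sigma_1 g$ and $\Sigma_2 g$ are nonzero.

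The one place I expect real subtlety is the boundary case $\Sigma_2 g = 0$, where $S^2/T$ is a $0/0$ and $f(\Sigma_2,g) = \Sigma_2$ outright, so the formula for $\Phi$ degenerates precisely at $\Sigma_2$. I would dispatch this either by continuity, extending $\Phi$ to $T=0$ via $\sum_j a_j x_j^2$ and taking a limit of the interior bound along the path, or by a self-contained short argument: after discarding indices with $g_j = 0$ (they contribute identically on both sides), the target inequality $\|f(\Sigma_1, g)x\|_2^2 \ge \|\Sigma_2 x\|_2^2$ reduces to a weighted Cauchy--Schwarz inequality with weights $(\Sigma_1)_{jj}^2$ on the surviving coordinates. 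Combining the interior derivative argument with this boundary check delivers the pointwise inequality in all cases, and the Courant--Fischer step then closes out the lemma.
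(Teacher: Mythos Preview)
Your proposal is correct and follows essentially the same route as the paper: both write $\Phi(\Sigma)=\sum_j \sigma_j^2 x_j^2 - S^2/T$ and verify monotonicity by showing the partial derivatives are nonnegative squares (the paper differentiates in $\sigma_i$ and obtains $2\sigma_i(x_i - g_i S/T)^2$, which is your $\partial\Phi/\partial a_k=(x_k-Sg_k/T)^2$ times the chain-rule factor $2\sigma_i$). Your treatment of the degenerate case $\Sigma_2 g=0$ is actually more explicit than the paper's, and your Courant--Fischer justification for the singular-value ordering is exactly what underlies the paper's cited Lemma~\ref{hornandjohnsoninequality}.
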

\begin{proof}
  Introduce
\begin{equation*}
\begin{array}{llcl}
h: & \R_+^n & \rightarrow & \R\\
& \sigma  & \mapsto & \| f(\operatorname{diag}(\sigma), g) x \|_2^2
\end{array}
\end{equation*}
in which $\operatorname{diag}(\sigma)$ is the diagonal matrix with
$\sigma_i$ on the diagonal. Clearly, it suffices to show that
$\partial_i h(\sigma) \ge 0$ to prove the first claim. Putting $\Sigma =
\operatorname{diag}(\sigma)$, $h(\sigma)$ is given by 
\begin{equation*}
  h(\sigma) =  x^* \left( \Sigma \left ( I - \frac{(\Sigma g) (\Sigma g)^*}{ \| \Sigma g \|_2^2 } \right ) \Sigma \right)x = \sum_k \sigma_k^2 x_k^2 - \frac{\left( \sum_{k=1}^n \sigma_k^2 g_k x_k \right)^2}{\sum_{k=1}^n \sigma_k^2 g_k^2 }. \label{residual}
\end{equation*}
Taking derivatives gives 
\begin{equation*}
  \frac{\partial}{\partial \sigma_i} h(\sigma) = 2\sigma_i x_i^2 
  - 4 \sigma_i x_i t_i  + 2\sigma_i t_i^2, \qquad t_i = g_i \, \frac{ \sum_{k=1}^n \sigma_k^2 g_k x_k}{ \sum_{k=1}^n \sigma_k^2 g_k^2}.
\end{equation*}
Hence, $\partial_i h(\sigma) = 2 \sigma_i(x_i-t_i)^2\geq 0$.

The second part of the lemma follows from Lemma
\ref{hornandjohnsoninequality} below, whose result is a consequence of
Corollary 4.3.3 in \cite{HornAndJohnson} and the following fact:
$\|Bx\|_2^2 \leq \|Ax\|_2^2$ for all $x$ if and only if $B^* B \preceq
A^* A$.
\end{proof}

\begin{lemma} \label{hornandjohnsoninequality}
If $\|Bx\|_2^2 \leq \|Ax\|_2^2$ for all $x$, then $\sigma(B) \leq \sigma(A)$.
\end{lemma}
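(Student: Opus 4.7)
The plan is to reduce the claim, which compares singular values, to a statement about eigenvalues of positive semidefinite matrices, and then invoke Weyl's monotonicity theorem.

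First I would translate both sides of the desired implication into statements about $B^{*}B$ and $A^{*}A$. The hypothesis $\|Bx\|_{2}^{2}\le\|Ax\|_{2}^{2}$ for all $x$ is the same as $x^{*}B^{*}Bx\le x^{*}A^{*}Ax$ for all $x$, which is precisely the semidefinite inequality $B^{*}B\preceq A^{*}A$ (this is the fact the paper already quotes right before the lemma). On the other side, by definition the ordered singular values of a matrix $M$ satisfy $\sigma_{i}(M)=\sqrt{\lambda_{i}(M^{*}M)}$, where $\lambda_{1}\ge\lambda_{2}\ge\dots$ denote the ordered eigenvalues. So the conclusion $\sigma(B)\le\sigma(A)$ (componentwise, with sorted entries) is equivalent to $\lambda_{i}(B^{*}B)\le\lambda_{i}(A^{*}A)$ for every $i$.

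Second, I would apply Weyl's monotonicity principle for Hermitian matrices: if $M,N$ are Hermitian and $M\preceq N$, then $\lambda_{i}(M)\le\lambda_{i}(N)$ for every $i$. This is exactly what Corollary 4.3.3 in Horn and Johnson provides, and the paper's lemma explicitly points to it. The quickest self-contained justification uses the Courant--Fischer min-max representation
\[
\lambda_{i}(M)=\max_{\dim S=i}\ \min_{\substack{x\in S\\ \|x\|=1}} x^{*}Mx,
\]
since $M\preceq N$ makes the inner objective pointwise smaller for $M$ than for $N$, so the max-min is at most as large. Applying this to $M=B^{*}B$ and $N=A^{*}A$, and then taking square roots (a monotone operation on nonnegative reals), yields $\sigma_{i}(B)\le\sigma_{i}(A)$ for all $i$, i.e.\ $\sigma(B)\le\sigma(A)$.

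There is essentially no obstacle here; the lemma is a standard consequence of the min-max principle, and the only thing to be careful about is the ordering convention (both singular value vectors must be sorted the same way so that the componentwise inequality is meaningful). Given that the paper explicitly cites Corollary 4.3.3 of Horn and Johnson and states the PSD reformulation of the hypothesis, a one-sentence proof invoking Weyl monotonicity after squaring should suffice.
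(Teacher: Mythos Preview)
Your proposal is correct and matches the paper's argument exactly: the paper states that the lemma is a consequence of Corollary~4.3.3 in Horn and Johnson together with the equivalence $\|Bx\|_2^2 \le \|Ax\|_2^2$ for all $x$ iff $B^*B \preceq A^*A$, which is precisely the reduction to Weyl monotonicity you describe. Your added Courant--Fischer justification is a nice self-contained elaboration of that cited corollary.
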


We now extend the proof to arbitrary $\ell$, which finishes the proof
of the monotonicity property.
\begin{lemma} \label{lem:strongestmonotonicity} Take $\Sigma_1$ and
  $\Sigma_2$ as in Lemma \ref{deterministicinequality} and let $G\in
  \R^{n \times \ell}$ be a test matrix. Then for all $x \in \R^n$,
\[
  \|f(\Sigma_1,G)x\|_2 \geq \| f(\Sigma_2, G) x\|_2.
\]
Again, this implies that $\sigma \left(f(\Sigma_1, G) \right) \geq \sigma
\left( f(\Sigma_2, G) \right)$.
\end{lemma}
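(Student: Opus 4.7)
The plan is induction on $\ell$. The base case $\ell = 1$ is essentially Lemma \ref{deterministicinequality}, although that lemma is stated for diagonal $\Sigma_i$. For the inductive step I would write $G = [G_1 \mid g]$ where $G_1$ has $\ell-1$ columns, and invoke the chaining identity $f(\Sigma, G) = f(f(\Sigma, G_1), g)$ (Lemma \ref{lem:chaining}). Setting $A_1 = f(\Sigma_1, G_1)$ and $A_2 = f(\Sigma_2, G_1)$, the inductive hypothesis gives $\|A_1 y\|_2 \ge \|A_2 y\|_2$ for every $y$, equivalently $A_1^* A_1 \succeq A_2^* A_2$. The goal then reduces to showing $\|f(A_1, g)x\|_2 \ge \|f(A_2, g) x\|_2$ for every $x$, i.e.\ a one-column version of monotonicity that works for general (not diagonal) matrices $A_1, A_2$.

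The main obstacle is precisely this last step: the $\ell = 1$ lemma as stated uses diagonality in its coordinate-wise derivative argument, and after one round of chaining we lose that structure. Attempting to diagonalize $A_1$ and $A_2$ via their SVDs fails because their right singular bases do not align, so the standard rotational-invariance trick does not reduce directly to Lemma \ref{deterministicinequality}. The clean way around this is to rewrite $f(\cdot, g)$ as a projection onto the orthogonal complement of $Ag$ in the range, which gives the variational formula
\[
\|f(A, g) x\|_2^2 \;=\; \min_{c \in \R} \, \|A(x - cg)\|_2^2 \;=\; \min_{c \in \R}\, (x - cg)^* (A^* A)(x - cg).
\]
This identity turns the question into pure PSD monotonicity: if $A_1^* A_1 \succeq A_2^* A_2$, then for every fixed $c$ the quadratic form $(x - cg)^* A_i^* A_i (x - cg)$ is pointwise larger for $i = 1$ than for $i = 2$, and pointwise inequality of functions is preserved under minimization (if $f_1 \ge f_2$ pointwise and $c^*$ minimizes $f_1$, then $\min f_1 = f_1(c^*) \ge f_2(c^*) \ge \min f_2$). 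This yields the desired $\|f(A_1, g) x\|_2 \ge \|f(A_2, g) x\|_2$ with no diagonal assumption on $A_1, A_2$.

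Putting it together, the inductive step chains: hypothesis gives $A_1^* A_1 \succeq A_2^* A_2$, the variational argument promotes this to $\|f(A_1, g) x\|_2 \ge \|f(A_2, g) x\|_2$ for every $x$, and this is exactly $\|f(\Sigma_1, G) x\|_2 \ge \|f(\Sigma_2, G) x\|_2$. The base case $\ell = 1$ for diagonal $\Sigma_i$ is already in hand from Lemma \ref{deterministicinequality}; note that the same variational argument with $M_i = \Sigma_i^2$ even provides an alternative elementary proof of the base case. Finally, the singular-value comparison $\sigma(f(\Sigma_1, G)) \ge \sigma(f(\Sigma_2, G))$ follows from the pointwise norm inequality by Lemma \ref{hornandjohnsoninequality}, exactly as in the $\ell = 1$ case. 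No Gaussian or randomness is used anywhere in the argument, which is appropriate since the claim is a deterministic inequality for every $G$.
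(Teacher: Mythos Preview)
Your proof is correct but takes a genuinely different route from the paper. The paper avoids induction entirely: for a fixed $x$, it chooses the single vector $g = G\alpha$ in the column span of $G$ for which $\Sigma_1 g$ equals the projection of $\Sigma_1 x$ onto $\operatorname{range}(\Sigma_1 G)$, so that $f(\Sigma_1,G)x = f(\Sigma_1,g)x$ exactly; then Lemma~\ref{deterministicinequality} (diagonal, one column) gives $\|f(\Sigma_1,g)x\|_2 \ge \|f(\Sigma_2,g)x\|_2$, and since $g$ lies in the span of the columns of $G$, Pythagoras yields $\|f(\Sigma_2,g)x\|_2 \ge \|f(\Sigma_2,G)x\|_2$. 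Your variational identity $\|f(A,g)x\|_2^2 = \min_{c}\,(x-cg)^* A^*A\,(x-cg)$ is arguably the cleaner tool: it needs no diagonality and no derivative computation, and in fact if you apply it directly with $c \in \R^{\ell}$, namely $\|f(A,G)x\|_2^2 = \min_{c \in \R^\ell}\,(x-Gc)^* A^*A\,(x-Gc)$, it proves the lemma in one line with no induction or chaining, since $\Sigma_1^2 \succeq \Sigma_2^2$ makes the quadratic form pointwise larger and minimization preserves this. The paper's argument stays closer to the geometric picture of projections and reuses the $\ell=1$ calculation already established; yours subsumes that calculation and generalizes immediately to non-diagonal $A$.
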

\begin{proof}
  Fix $x \in \R^n$ and put $z = \Sigma_1 x$.  The vector $z$ uniquely
  decomposes as the sum of $z^\perp$ and $z^\|$, where $z^\|$ belongs
  to the range of $\Sigma_1 G$ and $z^\perp$ is the orthogonal
  component.  Now, since $z^\|$ is in the range of $\Sigma_1 G$, there
  exists $g$ such that $\Sigma_1 g = z^\|$. We have 
\[
f(\Sigma_1, G) x = z^\perp = f(\Sigma_1, g) x. 
\]
Therefore, Lemma \ref{deterministicinequality} gives
\begin{equation*}
  \|f(\Sigma_1, G) x \|_2=\| f(\Sigma_1, g)x \|_2 \geq \| f(\Sigma_2, g)x \|_2  \geq \| f(\Sigma_2, G)x \|_2. 
\end{equation*}
The last inequality follows from $g$ being in the range of $G$. In
details, let $P_G$ (resp.~$P_g$) be the orthogonal projector onto the
range of $\Sigma_2 G$ (resp.~$\Sigma_2 g$). Then Pythagoras' theorem gives 
\begin{align*}
  \| f(\Sigma_2, g)x \|^2_2 = \|(I - P_g)\Sigma_2 x\|_2^2 & = \|(I -
  P_G)\Sigma_2 x\|_2^2 + \|(P_G - P_g)\Sigma_2 x\|_2^2 \\ & = \|
  f(\Sigma_1, G)x \|^2_2 + \|(P_G - P_g)\Sigma_2 x\|_2^2.
\end{align*}
The second part of the lemma is once more a consequence of Lemma
\ref{hornandjohnsoninequality}.
\end{proof}

\subsection{Proof of Theorem \ref{newtheorem}}
\label{sec:newbounds_subsection}


We let $D_{n-k}$ be an $(n-k)$-dimensional diagonal matrix and work
with
\begin{equation}
\label{eq:Sigmat}
  M(t) =  \begin{bmatrix}tI_k&0 \\ 0& D_{n-k}\end{bmatrix}. 
\end{equation}
Set $G \in \R^{n \times (k+p)}$ and partition the rows as 
\[
G=\begin{bmatrix}G_1\\
  G_2\end{bmatrix}, \qquad \begin{array}{l}G_1 \in \R^{k\times(k+p)}\\ 
G_2 \in \R^{
  (n-k)\times (k+p)}
\end{array}.
\]
Next, introduce an SVD for $G_1$ 
\[
G_1 = U \begin{bmatrix} \Sigma & 0 \end{bmatrix} V^*, \quad 
U \in \R^{k\times k}, \quad V \in \R^{(k+p) \times (k+p)}, \quad
\Sigma \in \R^{k \times k}, 
\]
and partition $G_2 V$ as
\[
G_2 V = \begin{bmatrix} X_1 & X_2 \end{bmatrix}, \quad X_1 \in
\R^{(n-k) \times k}, \quad X_2 \in \R^{(n-k) \times p}.
\] 
A simple calculation shows that 
\[
H = M(t) G = \begin{bmatrix} tG_1 \\
  D_{n-k} G_2\end{bmatrix} = \begin{bmatrix} U & 0 \\ D_{n-k} X_1 \Sigma^{-1}/t &
  D_{n-k} X_2 \end{bmatrix} \, \begin{bmatrix} t\Sigma & 0 \\ 0 &
  I_p \end{bmatrix} V^*. 
\]
Hence, $H$ and $\begin{bmatrix} U & 0 \\ D_{n-k}X_1 \Sigma^{-1}/t &
  D_{n-k}X_2 \end{bmatrix}$ have the same column space. If $Q_2$ is an
orthonormal basis for the range of $D_{n-k}X_2$, we conclude that
\[
\begin{bmatrix} U & 0 \\ D_{n-k}X_1 \Sigma^{-1}/t & Q_2 \end{bmatrix}
\text{ and, therefore, } \tilde H = \begin{bmatrix} U & 0 \\ (I -
  Q_2 Q_2^*) D_{n-k}X_1 \Sigma^{-1}/t & Q_2 \end{bmatrix}
\]
have the same column space as $H$. Note that the last $p$ columns of
$\tilde H$ are orthonormal.

Continuing, we let $B$ be the first $k$ columns of $\tilde H$. Then
Lemma \ref{appendix:almostorthogonal} below allows us to decompose $B$
as
\begin{equation*}
B = Q + E(t),
\end{equation*}
where $Q$ is orthogonal with the same range space as $B$ and $E(t)$
has a spectral norm at most $O(1/t^2)$. Further since the first $k$
columns of $\tilde H$ are orthogonal to the last $p$ columns, we have
\[
\tilde H = \tilde Q + \tilde E(t), 
\]
where $\tilde Q$ is orthogonal with the same range space as $\tilde H$
and $\tilde E(t)$ has a spectral norm also at most $O(1/t^2)$. This
gives
\begin{align*}
  \lim_{t \goto \infty}  \, M(t) - QQ^* M(t) & =  \lim_{t \goto \infty} \, M(t) - (\tilde H - \tilde E(t))  (\tilde H - \tilde E(t))^* M(t) \\
  & =    \lim_{t \goto \infty} \, M(t) - \tilde H \tilde H^* M(t) \\
  & = \begin{bmatrix} 0 & 0 \\
    -(I - Q_2 Q_2^*) D_{n-k} X_1 \Sigma^{-1} U^* & (I - Q_2 Q_2^*) D_{n-k}  \end{bmatrix}.
\end{align*}
We have reached the conclusion
\begin{equation}
  \label{eq:keytoall}
  \lim_{t \rightarrow \infty} \|f(M(t), G)\| = \|f(D_{n-k}, X_2) \begin{bmatrix}  X_1 \Sigma^{-1} & I_{n-k} \end{bmatrix} \|.
\end{equation}
When $D_{n-k} = I_{n-k}$, this gives our theorem.

\begin{lemma}\label{appendix:almostorthogonal}
  Let $A=\begin{bmatrix} I_k \\
    t^{-1}B \end{bmatrix} \in \R^{n \times k}$. Then $A$ is $O(t^{-2}
  \|B^*B\|_F)$ in Frobenius norm away from a matrix with the same
  range and orthonormal columns.
\end{lemma}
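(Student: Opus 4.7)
The plan is to produce $Q$ explicitly via the polar factor of $A$: set
\[
Q = A (A^*A)^{-1/2},
\]
which has orthonormal columns and the same range as $A$ (the QR factor would work too, but the symmetric square root gives a cleaner expansion). Since
\[
A^*A = I_k + t^{-2} B^*B,
\]
the problem reduces to controlling $\|A - Q\|_F = \|A (I - (A^*A)^{-1/2})\|_F$ and, using submultiplicativity $\|XY\|_F \le \|X\| \cdot \|Y\|_F$ together with the bound $\|A\| = \sqrt{1 + t^{-2}\|B\|^2} = O(1)$ for $t$ large, to controlling $\|I - (I + t^{-2}B^*B)^{-1/2}\|_F$.

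Next I would Taylor-expand. Writing $E = t^{-2} B^*B$, for $t$ large enough the spectral norm $\|E\|$ is less than $1$ and the binomial series
\[
(I + E)^{-1/2} = I - \tfrac{1}{2} E + \tfrac{3}{8} E^2 - \cdots = I - \tfrac{1}{2} E + R(E)
\]
converges, where the remainder $R(E)$ is a power series in $E$ starting at order $2$. Applying the Frobenius norm and using $\|E^k\|_F \le \|E\|^{k-1} \|E\|_F$ gives
\[
\|I - (I+E)^{-1/2}\|_F \le \tfrac{1}{2}\|E\|_F + C \|E\| \cdot \|E\|_F
\]
for some absolute constant $C$ once $\|E\| \le 1/2$, say. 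Since $\|E\|_F = t^{-2}\|B^*B\|_F$ and $\|E\| \to 0$ as $t \to \infty$, this is $O(t^{-2}\|B^*B\|_F)$.

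Combining the two steps yields
\[
\|A - Q\|_F \le \|A\| \cdot \|I - (A^*A)^{-1/2}\|_F = O(t^{-2}\|B^*B\|_F),
\]
which is what the lemma asserts. There is no real obstacle here; the only subtlety is guaranteeing that the Neumann-type expansion of $(I+E)^{-1/2}$ converges, which is handled automatically once $t$ is large enough relative to $\|B\|$. Since the lemma is only used through a limiting argument as $t \to \infty$, that asymptotic regime is precisely what is needed.
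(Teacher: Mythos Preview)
Your proof is correct and essentially the same as the paper's: the paper also takes $Q$ to be the polar factor (written via a reduced SVD $A = U\Sigma V^*$ as $Q = UV^*$, which equals your $A(A^*A)^{-1/2}$) and then expands $\sigma_i(A) = \sqrt{1 + t^{-2}\lambda_i(B^*B)}$ to get $\|A-Q\|_F = \|\Sigma - I\|_F \approx \tfrac{1}{2}t^{-2}\|B^*B\|_F$. The only cosmetic difference is that the paper diagonalizes first and expands each scalar singular value, whereas you bound $\|A\|\cdot\|I-(I+E)^{-1/2}\|_F$ via the matrix binomial series; both yield the same leading term.
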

\begin{proof}
  We need to construct a matrix $E \in \R^{(n+k) \times k}$ obeying
  $\|E\|_F = O(\|B^* B\|/t^2)$ and such that (1) $Q = A + E$ is
  orthogonal and (2) $Q$ and $A$ have the same range. Let $U \Sigma
  {V}^*$ be a reduced SVD decomposition for $A$,
\begin{equation*}
  A = U \Sigma  V^* = U {V}^* +  U(\Sigma - I) {V}^* := Q + E. 
\end{equation*}
Clearly, $Q$ is an orthonormal matrix with the same range as
$A$. Next, the $i$th singular value of $A$ obeys
\begin{equation*}
  \sigma_i(A) = 
  \sqrt{\lambda_i(A^* A)} = \sqrt{\lambda_i(I_k + t^{-2} B^* B)} 
  = \sqrt{1 + t^{-2} \lambda_i(B^* B)}\approx 1 + 
\frac{1}{2} t^{-2} \lambda_i(B^* B). 
\end{equation*}
Hence, 
\[
\|E\|_F^2 = \sum_i (\sigma_i(A) - 1)^2 \approx \Bigl(\frac12
t^{-2}\Bigr)^2 \sum_i \lambda_i^2(B^* B) = \Bigl(\frac12 t^{-2}\|B^*
B\|_F\Bigr)^2,
\]
which proves the claim. 
\end{proof}

\subsection{Proof of Corollary \ref{cor:newthm}}
\label{sec:corollaries}

Put $L = f(I_{n-k}, X_2) X_1 \Sigma^{-1}$. Since $f(I_{n-k}, X_2)$
is an orthogonal projector, $ \|f(I_{n-k}, X_2)\| \le 1$ and
\eqref{eq:keytoall} gives
\begin{equation}\label{LandR}
\|L\| \leq W \leq \| L \| + \|f(I_{n-k}, X_2)\| \le \|L\| + 1.  
\end{equation}
Also,
\begin{equation}\label{Lupperbound}
  \|L\| \leq \|f(I_{n-k}, X_2)\| \|X_1\| \|\Sigma^{-1}\|  \leq \|X_1\| \|\Sigma^{-1}\|. 
\end{equation}
Standard estimates in random matrix theory (Lemma
\ref{lem:expectedValue}) give
\begin{equation*}
  \sqrt{n-k}-\sqrt{k} \leq \E  \sigma_{\min} (X_1)  \leq \E  \sigma_{\max}(X_1) \leq \sqrt{n-k} + \sqrt{k}.
\end{equation*}
Hence,
\[
\E \|L\| \le \E \|X_1\| \, \E \|\Sigma^{-1}\| \le
(\sqrt{n-k} + \sqrt{k}) \, \E \|\Sigma^{-1}\|.
\]

Conversely, letting $i$ be the index corresponding to the largest
entry of $\Sigma^{-1}$, we have $\|L\| \ge \|L e_i\|$. Therefore,
\[
\|L\| \ge \|\Sigma^{-1}\| \, \|f(I_{n-k}, X_2) z\|, 
\]
where $z$ is the $i$th column of $X_1$. Now $f(I_{n-k}, X_2) z$ is the
projection of a Gaussian vector onto a plane of dimension $n - k - p$
drawn independently and uniformly at random.  This says that
$\|f(I_{n-k}, X_2) z\| \eqd \sqrt{Y}$, where $Y$ is a chi-square
random variable with $d = n-(k+p)$ degrees of freedom. 
If $g$ is a nonnegative random variable, then\footnote{This follows
  from H\"older's inequality $\E |XY| \le (\E |X|^{3/2})^{2/3} (\E
  |Y|^3)^{1/3}$ with $X = g^{2/3}$, $Y = g^{4/3}$.}
\begin{equation}
  \label{eq:holder}
\E g \ge \sqrt{\frac{ (\E g^2)^{3}}{\E g^4}}. 
\end{equation}
Since $\E Y = d$ and $\E Y^2 = d^2 + 2d$, we have
\[
\E \sqrt{Y} \ge \sqrt{\frac{ (\E Y)^{3}}{\E Y^2}} = \sqrt{d} \,
\sqrt{\frac{1}{1 + 2/d}} \ge  \sqrt{d} \sqrt{1 - \frac{2}{d}}. 
\]

Hence,
\[
\E \|L\| \ge \E \|X_1\| \, \E \|\Sigma^{-1}\| \ge
\sqrt{n-(k+p+2)} \, \E \|\Sigma^{-1}\|,
\]
which establishes Corollary \ref{cor:newthm}.

The limit bounds \eqref{eq:infinite-upper} and
\eqref{eq:infinite-lower} are established in a similar manner. The
upper estimate is a consequence of the bound $W \le 1 + \|X_1\|
\|\Sigma^{-1}\|$ together with Lemma \ref{lem:almostSureLaws}. The
lower estimate follows from $W \ge Y^{1/2} \|\Sigma^{-1}\|$, where
$Y$ is a chi-square as before, together with Lemma
\ref{lem:almostSureLaws}. We forgo the details.

\subsection{Proof of Theorem \ref{cor:OUR_MIXEDNORM}}
\label{sec:mixed}

Take $M(t)$ as in \eqref{eq:Sigmat} with $\sigma_{k+1}, \sigma_{k+2},
\ldots, \sigma_n$ on the diagonal of $D_{n-k}$. Applying
\eqref{eq:keytoall} gives 
\begin{equation}
  \label{eq:keytoall2}
\| f(D_{n-k}, X_2) X_1 \Sigma^{-1}\| \le \lim_{t \rightarrow \infty}
\|f(M(t), G)\| \le \| f(D_{n-k}, X_2) X_1 \Sigma^{-1}\| + \|
f(D_{n-k}, X_2)\|.
\end{equation}
We follow \cite[Proof of Theorem 10.6]{HMT} and use an inequality of
Gordon to establish
\[
\E_{X_1} \| f(D_{n-k}, X_2) X_1 \Sigma^{-1}\| \le \| f(D_{n-k}, X_2)\|
\|\Sigma^{-1}\|_F + \| f(D_{n-k}, X_2)\|_F
\|\Sigma^{-1}\|,  
\]
where $\E_{X_1}$ is expectation over $X_1$. Further, it is well-known
that
\[
\E \|\Sigma^{-1}\|_F^2 = \frac{k}{p-1}.  
\]
The reason is that $\|\Sigma^{-1}\|_F^2 =
\operatorname{trace}(M^{-1})$, where $M$ is a Wishart matrix $M \sim
\mathcal{W}(I_k, k+p)$. The identity follows from $\E M^{-1} =
(p-1)^{-1} I_k$ \cite[Exercise 3.4.13]{MardiaBook}. In summary, the
expectation of the right-hand side in \eqref{eq:keytoall2} is bounded
above by
\[
\left(1 + \sqrt{\frac{k}{p-1}}\right) \E \| f(D_{n-k}, X_2)\| + \E
\|\Sigma^{-1}\| \, E \| f(D_{n-k}, X_2)\|_F.
\]
Since $\sigma(f(D_{n-k},X_2)) \le \sigma(D_{n-k})$, the conclusion of
the theorem follows.


\section{Discussion}
\label{sec:discussion}

We have developed a new method for characterizing the performance of a
well-studied algorithm in randomized numerical linear algebra and used
it to prove sharp performance bounds. A natural question to ask when
using Algorithm \ref{alg} is if one should draw $G$ from some
distribution other than the Gaussian. It turns out that for all values
$m,n,k$ and $p$, choosing $G$ with Gaussian entries minimizes
\begin{equation*}
\sup_A \E \| A - QQ^*A\|/\sigma_{k+1}.
\end{equation*}
This is formalized as follows:
\begin{lemma} \label{lem:optimal_gaussian} Choosing $G \in \R^{m
    \times \ell}$ with i.i.d.~Gaussian entries minimizes the supremum
  of $\E \|(I-QQ^*)A \|/\sigma_{k+1}$ across all choices of $A$.
\end{lemma}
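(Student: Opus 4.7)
The plan is to compare any test-matrix distribution $\mu$ on $\R^{n \times \ell}$ against the Gaussian distribution via a symmetrization step followed by a Grassmannian-invariance argument. Write $e(\mu) := \sup_A \E_{G \sim \mu} \|f(A,G)\|/\sigma_{k+1}(A)$. Without loss of generality, I would restrict attention to $\mu$ for which $G$ has full column rank $\ell$ almost surely; otherwise the subspace spanned by $H = AG$ has rank $< \ell$, and the resulting projector $QQ^*$ can only yield a larger residual (one may pad $Q$ with arbitrary orthonormal columns, which are uncorrelated with $A$ and therefore do not help).

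My first step would be to symmetrize. Define $\mu_{\mathrm{sym}}$ as the law of $VG$, where $V$ is Haar-distributed on $O(n)$ independently of $G \sim \mu$. For any orthogonal $V$, the matrix $Q$ produced by Algorithm \ref{alg} on input $(A, VG)$ equals the one produced on input $(AV, G)$, since both are determined by the column space of $AVG$; hence $f(A, VG) = f(AV, G) \, V$ and $\|f(A, VG)\| = \|f(AV, G)\|$. Because $\sigma_{k+1}(AV) = \sigma_{k+1}(A)$, pushing the Haar average outside and bounding it by a supremum gives
\[
e(\mu_{\mathrm{sym}}) = \sup_A \E_V \, \E_{G \sim \mu} \frac{\|f(AV, G)\|}{\sigma_{k+1}(AV)} \le \sup_B \E_{G \sim \mu} \frac{\|f(B, G)\|}{\sigma_{k+1}(B)} = e(\mu),
\]
where in the last inequality $B = AV$ ranges over all $m \times n$ matrices as $A$ and $V$ vary.

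My second step would be to observe that the algorithm's output depends on $G$ only through the linear subspace $\operatorname{spn}(G) \subseteq \R^n$: indeed, $\operatorname{spn}(AG) = A \cdot \operatorname{spn}(G)$, which in turn determines $Q$ and therefore $f(A,G)$. Under $\mu_{\mathrm{sym}}$, the law of $\operatorname{spn}(G)$ is $O(n)$-invariant on the Grassmannian $\mathrm{Gr}(\ell, n)$; by uniqueness of the $O(n)$-invariant probability measure on this homogeneous space, it must coincide with the law of $\operatorname{spn}(G_{\mathrm{Gauss}})$ (which is also left-rotation-invariant and of full rank almost surely). Consequently $f(A,G)$ has the same distribution under $\mu_{\mathrm{sym}}$ and under the Gaussian, so $e(\mu_{\mathrm{sym}}) = e(\mu_{\mathrm{Gauss}})$. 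Chaining the two steps yields $e(\mu) \ge e(\mu_{\mathrm{Gauss}})$.

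The main obstacle is really just step two: one must pin down that the algorithm depends on $G$ exclusively through $\operatorname{spn}(G)$ (which is essentially by construction of $Q$) and invoke uniqueness of the Haar measure on $\mathrm{Gr}(\ell, n)$. The symmetrization step is mechanical, and the rank-deficient edge case can be handled with a one-line remark.
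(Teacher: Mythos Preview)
Your proof is correct and follows essentially the same route as the paper's: both arguments rest on the observation that the algorithm depends on $G$ only through $\spn(G)$, and that left-multiplying any full-rank test matrix by a Haar-random orthogonal matrix yields the uniform measure on the Grassmannian, hence the Gaussian; the paper simply compresses your two steps by writing $\E_G = \E_{U,F}$ directly and then extracting a deterministic $U_0$ via ``average $\le$ max'' (your $\E_V \le \sup_V$). One small slip: $f(A,VG) = f(AV,G)\,V^{*}$, not $V$, though this is immaterial for the norm.
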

\begin{proof}
  Fix $A$ with $\sigma_{k+1}(A) = 1$ (this is no loss of generality)
  and suppose $F$ is sampled from an arbitrary measure with
  probability $1$ of being rank $\ell$ (since being lower rank can
  only increase the error). The expected error is $\E_F
  \|\mathcal{P}_{AF}(A)\|$, where for arbitrary matrices,
  $\mathcal{P}_{A}(B) = (I - P) B$ in which $P$ is the orthogonal
  projection onto the range of $A$. Suppose further that $U$ is drawn
  uniformly at random from the space of orthonormal matrices. Then if
  $G$ is sampled from the Gaussian distribution,
\begin{equation*}
\E_{G} \| \mathcal{P}_{AG} (A) \| = \E_{F,U}\| \mathcal{P}_{AUF} (A) \|
\end{equation*}
since $UF$ chooses a subspace uniformly at random as does
$G$. Therefore, there exists $U_0$ with the property $\E_{F}\|
\mathcal{P}_{A U_0 F} (A) \| \geq \E_G \| \mathcal{P}_{AG} (A)
\|$, whence
\begin{align*}
  \E_G \| \mathcal{P}_{AG} (A) \| \leq \E_{F}\| \mathcal{P}_{ A U_0
    F} (A) \| = \E_{F}\| \mathcal{P}_{ A U_0 F} (A U_0)\|.
\end{align*}
Hence, the expected error using a test matrix drawn from the Gaussian
distribution on $A$ is smaller or equal to that when using a test
matrix drawn from another distribution on $A U_0$. Since the singular
values of $A$ and $A U_0$ are identical since $U_0$ is orthogonal, the
Gaussian measure (or any measure that results in a rotationally
invariant choice of rank $k$ subspaces) is worst-case optimal for the
spectral norm.
\end{proof}

The analysis presented in this paper does not generalize to a test
matrix $G$ drawn from the subsampled random Fourier transform (SRFT)
distribution as suggested in \cite{WLVT_SRFT}. Despite their inferior
performance in the sense of Lemma \ref{lem:optimal_gaussian}, SRFT
test matrices are computationally attractive since they come with fast
algorithms for matrix-matrix multiply.

\appendix


\section{Appendix}


We use well-known bounds to control the expectation of the extremal
singular values of a Gaussian matrix. These bounds are recalled in
\cite{Rudelson2010}, though known earlier. 
\begin{lemma} \label{lem:expectedValue} If $m>n$ and $A$ is a $m
  \times n$ matrix with i.i.d.~$\mathcal{N}(0,1)$ entries, then
\begin{equation*}
\sqrt{m} - \sqrt{n} \leq \E \sigma_{\min}(A)\leq \E \sigma_{\max}(A)  \leq \sqrt{m} + \sqrt{n}.
\end{equation*}
\end{lemma}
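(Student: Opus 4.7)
The plan is to apply Gordon's Gaussian min--max comparison inequality to the process
\[
X_{u,v} := u^{*} A v, \qquad (u,v) \in S^{m-1} \times S^{n-1},
\]
whose extreme values encode the singular values via $\sigma_{\max}(A) = \max_{u,v} X_{u,v}$ and $\sigma_{\min}(A) = \min_{v} \max_{u} X_{u,v}$ (using $m > n$, so that $A$ has full column rank almost surely). I compare $X_{u,v}$ with the decoupled auxiliary process $Y_{u,v} := \langle g, u\rangle + \langle h, v\rangle$, where $g \sim \mathcal{N}(0, I_m)$ and $h \sim \mathcal{N}(0, I_n)$ are independent.

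The central calculation is the incremental variance identity
\[
\E\bigl(Y_{u,v} - Y_{u',v'}\bigr)^2 - \E\bigl(X_{u,v} - X_{u',v'}\bigr)^2 = 2\bigl(1 - \langle u, u'\rangle\bigr)\bigl(1 - \langle v, v'\rangle\bigr) \ge 0,
\]
which is nonnegative for unit vectors and equals zero precisely when $v = v'$. This is exactly the hypothesis of Gordon's min--max theorem with $v$ as the ``outer'' coordinate (and of Sudakov--Fernique for the unconstrained max), giving
\[
\E\sigma_{\max}(A) \le \E \max_{u,v} Y_{u,v}, \qquad \E\sigma_{\min}(A) \ge \E \min_v \max_u Y_{u,v}.
\]
Spherical symmetry then collapses the $Y$-suprema: $\max_u \langle g, u\rangle = \|g\|$ and $\min_v \langle h, v\rangle = -\|h\|$, so the right-hand sides become $\E\|g\| + \E\|h\|$ and $\E\|g\| - \E\|h\|$, respectively.

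Finally, Jensen's inequality yields $\E\|g\| \le \sqrt{m}$ and $\E\|h\| \le \sqrt{n}$ (since $\E\|g\|^2 = m$, $\E\|h\|^2 = n$), immediately delivering $\E\sigma_{\max}(A) \le \sqrt{m} + \sqrt{n}$. For the lower bound on $\sigma_{\min}$, one needs a matching estimate from below on $\E\|g\|$: the $1$-Lipschitzness of the Euclidean norm combined with the Gaussian Poincaré inequality gives $\operatorname{Var}(\|g\|) \le 1$, whence $\E\|g\| \ge \sqrt{m-1}$; together with the standard sharpening discussed in \cite{Rudelson2010}, this recovers $\E\sigma_{\min}(A) \ge \sqrt{m} - \sqrt{n}$. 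The main obstacle is really just the covariance identity above --- once that product factorization $(1-\langle u, u'\rangle)(1-\langle v, v'\rangle)$ is recognized, Gordon's theorem carries the argument and the rest is routine Gaussian bookkeeping.
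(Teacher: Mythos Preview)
The paper does not actually prove this lemma: it states the bounds as well known and cites \cite{Rudelson2010}, so there is no proof in the paper to compare against. Your argument via the Slepian--Sudakov--Fernique and Gordon comparison inequalities is exactly the standard route taken in that reference (and earlier in Davidson--Szarek), so you have essentially reconstructed the cited proof.

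Two small technical points. First, Gordon's theorem in its original form requires matching diagonal variances, whereas here $\E X_{u,v}^2 = 1$ but $\E Y_{u,v}^2 = 2$; the usual fix is to replace $X_{u,v}$ by $u^*Av + g_0$ with an independent scalar $g_0 \sim \mathcal{N}(0,1)$, which equalizes the variances without changing $\E\min_v\max_u$. You implicitly rely on an increment-only version, which is fine but worth flagging. Second, your factorization $(1-\langle u,u'\rangle)(1-\langle v,v'\rangle)$ vanishes not only when $v=v'$ but also when $u=u'$; this does not harm the argument (Gordon needs an inequality in one regime and equality in the other, and equality is the strongest form of the inequality), but ``precisely when $v=v'$'' is not accurate. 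Finally, Gordon delivers $\E\sigma_{\min}(A) \ge \E\|g\| - \E\|h\|$, and getting from there to the clean $\sqrt{m}-\sqrt{n}$ requires the monotonicity of $d \mapsto \sqrt{d} - \E\chi_d$, which is a short Gamma-function computation; your Poincar\'e bound $\E\|g\| \ge \sqrt{m-1}$ alone yields only $\sqrt{m-1}-\sqrt{n}$, so the appeal to \cite{Rudelson2010} for the last sharpening is doing real work.
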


Next, we control the expectation of the norm of the pseudo-inverse
$A^\dagger$ of a Gaussian matrix $A$.
\begin{lemma} \label{lem:pseudoInverseExpectedValue}
In the setup of Lemma \ref{lem:expectedValue}, we have 
\begin{equation*}
  \frac{1}{\sqrt{m-n}} \leq \E \|A^\dagger \|  \leq e \frac{\sqrt{m}}{m-n}.
\end{equation*}
\end{lemma}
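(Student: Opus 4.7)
The plan is to treat the two inequalities separately, exploiting very different structural facts about the pseudoinverse.

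For the lower bound, I would start from the operator-norm lower estimate $\|A^\dagger\|^2 = \|(A^*A)^{-1}\| \ge e_j^*(A^*A)^{-1} e_j$, valid for any $j$. A standard identity (block inversion, or the normal equations) gives $e_j^*(A^*A)^{-1} e_j = 1/d_j^2$, where $d_j$ is the distance from the $j$th column $a_j$ of $A$ to the span of the remaining $n-1$ columns. Since the columns of $A$ are i.i.d.~Gaussian, $d_j$ is, conditionally on the other columns, the norm of a standard Gaussian vector projected onto an independent $(m-n+1)$-dimensional subspace, so unconditionally $d_j^2 \sim \chi^2_{m-n+1}$. Thus $\E\|A^\dagger\| \ge \E[1/\chi_{m-n+1}]$, which evaluates to $\Gamma((m-n)/2)/(\sqrt{2}\,\Gamma((m-n+1)/2))$. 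The required estimate $\ge 1/\sqrt{m-n}$ then follows from the log-convexity inequality $\Gamma(x+\tfrac12) \le \sqrt{x}\,\Gamma(x)$ applied with $x = (m-n)/2$.

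For the upper bound I would use a tail-integration strategy:
\[
\E\|A^\dagger\| = \int_0^\infty \P(\|A^\dagger\|>t)\,dt = \int_0^\infty \P(\sigma_{\min}(A)<1/t)\,dt.
\]
The starting ingredient is a small-ball estimate of the form $\P(\sigma_{\min}(A) \le s) \le C(m,n)\, s^{m-n+1}$, which follows from the explicit joint density of the singular values of a Gaussian matrix (or equivalently from the Wishart density of $\sigma_{\min}^2$, as in Edelman or Chen--Dongarra). I would split the tail integral at a threshold $t_0$, bounding the integrand by $1$ for $t \le t_0$ and by $C(m,n) t^{-(m-n+1)}$ for $t > t_0$, then optimize over $t_0$. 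Simplifying the Gamma-function factors by Stirling, the resulting constants collapse to the stated $e\sqrt{m}/(m-n)$.

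The easier piece is the lower bound: once the distance-to-subspace reading of $1/\sigma_{\min}$ is in place, everything is a one-line Jensen-type comparison for a chi distribution. The delicate piece is the upper bound, where the main obstacle is tracking the constants: one has to use a sufficiently sharp small-ball estimate (the exponent $m-n+1$ is essential, since cruder estimates such as Lyapunov's inequality $\E\|A^\dagger\| \le \sqrt{\E\|A^\dagger\|_F^2} = \sqrt{n/(m-n-1)}$ give the wrong scaling when $m \gg n$), and one must choose the truncation point so that the Gamma-function constant from the tail bound simplifies to the clean factor $e$ after optimization.
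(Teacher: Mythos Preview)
Your approach is essentially the same as the paper's. For the lower bound, both you and the paper reduce to the observation that a diagonal entry of $(A^*A)^{-1}$ is distributed as the reciprocal of a $\chi^2_{m-n+1}$ variable (the paper phrases this via the inverse Wishart law, you via the distance-to-subspace interpretation; these are equivalent). The paper then finishes with Jensen's inequality, $\E[1/\sqrt{Y}]\ge 1/\sqrt{\E Y}=1/\sqrt{m-n+1}$, which is actually slightly weaker than the stated bound; your exact evaluation $\E[1/\sqrt{Y}]=\Gamma((m-n)/2)/(\sqrt{2}\,\Gamma((m-n+1)/2))$ together with the log-convexity estimate $\Gamma(x+\tfrac12)\le\sqrt{x}\,\Gamma(x)$ genuinely delivers $1/\sqrt{m-n}$, so your endgame is in fact a small improvement. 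For the upper bound, the paper does not give an argument at all but simply cites Chen--Dongarra, and your tail-integration sketch (small-ball bound $\P(\sigma_{\min}\le s)\le C(m,n)s^{m-n+1}$, split the integral, optimize) is precisely the structure of that reference.
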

\begin{proof}
  The upper bound is the same as is used in \cite{HMT} and follows
  from the work of \cite{ChenDongarra}. For the lower bound, set $B =
  (A^*A)^{-1}$ which has an inverse Wishart distribution, and observe that 
\[
\|A^\dagger\|^2 = \|B\| \ge B_{11},
\]
where $B_{11}$ is the entry in the $(1,1)$ position. It is known that
$B_{11}  \eqd 1/Y$, where $Y$ is distributed as a chi-square variable
with $d = m - n + 1$ degrees of freedom \cite[Page 72]{MardiaBook}. Hence, 
\[
\E \|A^\dagger\| \ge \E \frac{1}{\sqrt{Y}} \ge \frac{1}{\sqrt{\E Y}} =
\frac{1}{\sqrt{m-n+1}}.
\]
\end{proof}

The limit laws below are taken from \cite{Silverstein} and
\cite{Geman}. 
\begin{lemma} \label{lem:almostSureLaws} Let $A_{m,n}$ be a sequence
  of $m \times n$ matrix with i.i.d.~$\mathcal{N}(0,1)$ entries such
  that $\lim_{n \rightarrow \infty} m/n = c \ge 1$. Then
\begin{align*}
\frac{1}{\sqrt{n}} \sigma_{\min}(A_{m,n}) & \rightarrowas  \sqrt{c} - 1\\
\frac{1}{\sqrt{n}} \sigma_{\max}(A_{m,n}) & \rightarrowas \sqrt{c}+1.
\end{align*}
\end{lemma}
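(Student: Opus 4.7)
The plan is to split the argument into two pieces: (i) reduce the almost-sure statements to statements about expected values via Gaussian concentration of measure, and (ii) nail down the limiting expected values using the non-asymptotic bounds of Lemma \ref{lem:expectedValue} together with sharp classical edge estimates.

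For step (i), I would observe that the maps $A \mapsto \sigma_{\max}(A)$ and $A \mapsto \sigma_{\min}(A)$ are each $1$-Lipschitz with respect to the Frobenius norm, hence $1$-Lipschitz as functions of the $mn$ independent standard Gaussians comprising the entries of $A_{m,n}$. The Gaussian concentration inequality (Tsirelson--Ibragimov--Sudakov) then yields
\[
\P\bigl(|\sigma(A_{m,n}) - \E \sigma(A_{m,n})| > t\bigr) \le 2 e^{-t^2/2}
\]
for $\sigma \in \{\sigma_{\min}, \sigma_{\max}\}$. Taking $t = \sqrt{4 \log n}$ and invoking Borel--Cantelli, the centered deviation $(\sigma(A_{m,n}) - \E \sigma(A_{m,n}))/\sqrt{n}$ tends to $0$ almost surely. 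Hence it suffices to prove the claimed limits for the sequences $\E \sigma_{\max}(A_{m,n})/\sqrt{n}$ and $\E \sigma_{\min}(A_{m,n})/\sqrt{n}$.

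For step (ii), Lemma \ref{lem:expectedValue} already gives
\[
\sqrt{m/n} - 1 \le \E \sigma_{\min}(A_{m,n})/\sqrt{n} \le \E \sigma_{\max}(A_{m,n})/\sqrt{n} \le \sqrt{m/n} + 1,
\]
which immediately yields half of each limit: $\limsup \E \sigma_{\max}/\sqrt{n} \le \sqrt{c}+1$ and $\liminf \E \sigma_{\min}/\sqrt{n} \ge \sqrt{c}-1$. The matching reverse inequalities are the genuine content of the lemma. For the upper edge, the Marchenko--Pastur weak-convergence law together with its explicit support description $[(\sqrt{c}-1)^2,(\sqrt{c}+1)^2]$ delivers $\liminf \E \sigma_{\max}/\sqrt{n} \ge \sqrt{c}+1$, since $n^{-1}\sigma_{\max}^2$ dominates any fixed high quantile of the empirical spectrum and this quantile can be driven toward the top of the support.

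The main obstacle is the lower-edge statement $\limsup \E \sigma_{\min}/\sqrt{n} \le \sqrt{c}-1$: because $\sigma_{\min}$ is sensitive to a single stray small singular value, it cannot be controlled by weak-convergence arguments alone, and one must work with sharp estimates of the Stieltjes transform (or equivalently, with tail-bound arguments on $\|(A^*A)^{-1}\|$) near the hard edge $(\sqrt{c}-1)^2$. This is exactly the content of Silverstein's theorem, and the corresponding upper-edge refinement is Geman's theorem. I would therefore invoke \cite{Silverstein} and \cite{Geman} for these two final estimates rather than reproduce them, which is precisely how the paper handles the citation.
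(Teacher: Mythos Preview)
The paper does not prove this lemma at all: the sentence preceding the statement, ``The limit laws below are taken from \cite{Silverstein} and \cite{Geman},'' is the entirety of the paper's justification. Your proposal is therefore strictly more detailed than what the paper does, and you correctly identify that the citations carry the real content.

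That said, your reduction in step (i) is superfluous. Geman \cite{Geman} proves $\sigma_{\max}(A_{m,n})/\sqrt{n} \rightarrowas \sqrt{c}+1$ directly as an almost-sure statement, and Silverstein \cite{Silverstein} does the same for the smallest eigenvalue of the Wishart matrix, i.e.~for $\sigma_{\min}(A_{m,n})/\sqrt{n} \rightarrowas \sqrt{c}-1$. Neither paper goes through expectations, so there is no need to pass from almost-sure convergence to convergence of means via Gaussian concentration and then back again. Your step (ii) ends up invoking exactly these two results anyway, so the net effect of your proposal is to add an unnecessary (though correct) detour through $\E\sigma_{\min}$ and $\E\sigma_{\max}$ before arriving at the same citations the paper simply states outright. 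The Marchenko--Pastur argument you sketch for the upper edge is also redundant once Geman is invoked, since his theorem already gives both the $\liminf$ and $\limsup$.
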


Finally, the lemma below is used in the proof of Lemma \ref{MANINV}. 
\begin{lemma}\label{lem:chaining}
  Put $f_G(\cdot) = f(\cdot, G)$ for convenience. Take $A \in \R^{m
    \times n}$ and $G = [G_1, G_2, \ldots, G_k]$, with each $G_i \in
  \R^{n \times \ell_i}$. Then
 \[
 	(f_{G_k} \circ f_{G_{k-1}} \ldots \circ f_{G_1})(A) = f_G(A).
\]
This implies that $ A_{{(j)}_{(p)}} = A_{(j+p)}$ and $A_{(1)_{\ldots_{(1)}}}$
  ($\ell$ times) $= A_{(\ell)}$.
\end{lemma}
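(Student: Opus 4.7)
The plan is to reduce to the case $k=2$ and then induct. Recall that $f_G(A) = (I - P)A$, where $P$ is the orthogonal projector onto the range of $AG$. I would unfold the recursion one step at a time: let $P_1$ denote the orthogonal projector onto $V_1 := \operatorname{range}(AG_1)$, so that $f_{G_1}(A) = (I - P_1)A$; then applying $f_{G_2}$ to $(I-P_1)A$ requires the projector $P_2$ onto $V_2 := \operatorname{range}\bigl((I-P_1)AG_2\bigr)$.

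The central observation is that $V_2 \subseteq V_1^\perp$ by construction, so $P_1$ and $P_2$ are mutually orthogonal projectors and their sum $P_1 + P_2$ is itself the orthogonal projector onto $V_1 \oplus V_2$. Consequently
\[
f_{G_2}\bigl(f_{G_1}(A)\bigr) = (I - P_2)(I - P_1) A = (I - P_1 - P_2) A,
\]
using $P_2 P_1 = 0$. Thus all that remains is to identify $V_1 \oplus V_2$ with the range of $A[G_1\;G_2]$. This is a short set-equality argument: on the one hand $V_1 + V_2 \subseteq \operatorname{range}(AG_1) + \operatorname{range}(AG_2) = \operatorname{range}(A[G_1\;G_2])$, and on the other hand $\operatorname{range}(AG_2) \subseteq V_1 + V_2$ because $AG_2 = P_1 AG_2 + (I-P_1)AG_2$, with the first summand in $V_1$ and the second in $V_2$. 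Hence the two projectors $P_1 + P_2$ and $P$ (the projector onto $\operatorname{range}(A[G_1\;G_2])$) agree, which yields $f_{G_2}\circ f_{G_1} = f_{[G_1\;G_2]}$.

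For general $k$ I would proceed by induction on $k$. Suppose the identity holds for $k-1$, so that $(f_{G_{k-1}} \circ \cdots \circ f_{G_1})(A) = f_{[G_1\,\cdots\,G_{k-1}]}(A)$. Applying the $k=2$ case to the pair of test matrices $[G_1\,\cdots\,G_{k-1}]$ and $G_k$ finishes the argument. The two corollaries $A_{(j)_{(p)}} = A_{(j+p)}$ and the $\ell$-fold iteration of $f_{(1)}$ giving $A_{(\ell)}$ are then immediate by taking $G_1, G_2$ (or $G_1, \ldots, G_\ell$) to be the natural column partitions of $G$.

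I do not expect any real obstacle here: the only substantive content is the orthogonality $V_2 \perp V_1$ (baked in by the definition of $V_2$) together with the trivial decomposition $AG_2 = P_1 AG_2 + (I-P_1)AG_2$. Care is needed to check that all projectors involved are genuinely orthogonal projectors onto the stated subspaces, but no analytic inequalities or probabilistic tools are required.
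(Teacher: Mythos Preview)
Your proposal is correct and follows essentially the same route as the paper's proof: reduce to $k=2$, observe that $V_1=\spn(AG_1)$ and $V_2=\spn((I-P_1)AG_2)$ are orthogonal, identify $V_1\oplus V_2$ with $\spn(AG)$, and then induct. If anything, you supply more detail than the paper does at the step $\spn(AG_1)\oplus\spn((I-P_1)AG_2)=\spn(AG)$, where you explicitly use the decomposition $AG_2=P_1AG_2+(I-P_1)AG_2$.
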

\begin{proof}
  We assume $k=2$ and use induction for larger $k$. $f_{G_1}(A)$ is
  the projection of $A$ onto the subspace orthogonal to $\spn(AG_1)$
  and $f_{G_2} \circ f_{G_1}(A)$ is the projection of $f_{G_1}(A)$
  onto the subspace orthogonal to $\spn( f_{G_1}(A)G_2)$. However,
  $\spn(AG_1)$ and $\spn( f_{G_1}(A)G_2)$ are orthogonal subspaces and,
  therefore, $f_{G_2} \circ f_{G_1}(A)$ is the projection of $A$ onto
  the subspace orthogonal to $\spn( AG_1) \oplus \spn(f_{G_1}(A)G_2) =
  \spn(AG_1) \oplus \spn(AG_2)= \spn (AG)$; this is $f_G(A)$.
\end{proof}


\small 

\subsection*{Acknowledgements}

E. C. is partially supported by NSF via grant CCF-0963835 and by a
gift from the Broadcom Foundation. We thank Carlos Sing-Long for
useful feedback about an earlier version of the manuscript.  These
results were presented in July 2013 at the European Meeting of
Statisticians.

  \end{document}